\DeclareMathOperator{\ex}{ex}
\def\HH{\mathcal{H}}
\def\C{\mathcal{C}}
\begin{document}
\allowdisplaybreaks[2]

\newtheorem{theorem}{Theorem}[section]
\newtheorem{cor}[theorem]{Corollary}
\newtheorem{lemma}[theorem]{Lemma}
\newtheorem{fact}[theorem]{Fact}
\newtheorem{prob}[theorem]{Problem}
\newtheorem{property}[theorem]{Property}
\newtheorem{corollary}[theorem]{Corollary}
\newtheorem{proposition}[theorem]{Proposition}
\newtheorem{claim}[theorem]{Claim}
\newtheorem{conjecture}[theorem]{Conjecture}
\newtheorem{definition}[theorem]{Definition}
\theoremstyle{definition}
\newtheorem{example}[theorem]{Example}
\newtheorem{remark}[theorem]{Remark}
\newcommand\eps{\varepsilon}

\title{Edge-colorings avoiding patterns in a triangle}

\author[C. Hoppen]{Carlos Hoppen}
\address{Instituto de Matem\'atica e Estat\'{i}stica, UFRGS -- Avenida Bento Gon\c{c}alves, 9500, 91501--970 Porto Alegre, RS, Brazil}
\email{choppen@ufrgs.br}

\author[H. Lefmann]{Hanno Lefmann}
\address{Fakult\"at f\"ur Informatik, Technische Universit\"at Chemnitz,
Stra\ss{}e der Nationen 62, 09111 Chemnitz, Germany}
\email{Lefmann@Informatik.TU-Chemnitz.de}

\author[D. R. Schmidt ]{Dionatan Ricardo Schmidt}
\address{Instituto de Matem\'atica e Estat\'{i}stica, UFRGS -- Avenida Bento Gon\c{c}alves, 9500, 91501--970 Porto Alegre, RS, Brazil}
\email{dionatan.schmidt@ufrgs.br}

\thanks{This work was partially supported by CAPES and DAAD via Probral (CAPES Proc.~88881.143993/2017-01 and DAAD~57391132 and~57598268). The first author acknowledges the support of CNPq~315132/2021-3), Conselho Nacional de Desenvolvimento Cient\'{i}fico e Tecnol\'{o}gico. The third author acknowledges the support by CAPES}

\begin{abstract}
For positive integers $n$ and $r$, we consider $n$-vertex graphs with the maximum number of $r$-edge-colorings with no copy of a triangle where exactly two colors appear. We prove that, if $2 \leq r \leq 26$ and $n$ is sufficiently large, the maximum is attained by the bipartite Tur\'{a}n graph $T_2(n)$ on $n$ vertices. This is best possible, as $T_2(n)$ is not extremal for $r \geq 27$ colors and $n \geq 3$. 
\end{abstract}

\maketitle

\section{Introduction}
Starting with a question of Erd\H{o}s and Rothschild~\cite{erdos}, there has been substantial interest in the problem of characterizing graphs that admit the largest number of $r$-edge-colorings avoiding a fixed pattern of a graph $F$, where the number $r$ of colors and the pattern of $F$ are given. To be precise, fix an integer $r\geq 2$ and a graph $F$. We say that $P$ is an \emph{$r$-pattern} of $F$ if it is a partition of the edge set of $F$ into at most $r$ classes. An $r$-edge-coloring (or $r$-coloring, for short) of a graph $G$ is said to be \emph{$P$-free} if $G$ does not contain a copy of $F$ such that the partition of the edge set induced by the coloring is isomorphic to $P$. We write $c_{r,P}(G)$ for the number of $P$-free $r$-colorings of a graph $G$ and we define 
$$c_{r,P}(n)=\max\{c_{r,P}(G)\colon |V(G)|=n\}.$$
An $n$-vertex graph $G$ such that $c_{r,P}(G)=c_{r,P}(n)$ is said to be \emph{$(r,P)$-extremal}. 

We focus on the case when $F$ is the triangle $K_3$. There are three possible patterns: the monochromatic pattern $K_3^M$, the rainbow pattern $K_3^R$, and the pattern $K_3^{(2)}$ with two classes, one containing two edges and one containing a single edge, depicted in the figure below.

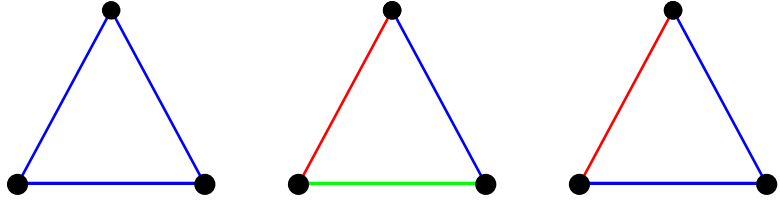
\begin{figure}[H]

			\begin{tikzpicture}[line cap=round,line join=round]
	\begin{axis}[
width=0.9\textwidth,
height=0.25\textwidth,
xmin=-1, xmax=9,
ymin=0, ymax=0.5,
minor tick num=3,
grid=none,
axis lines=none]

\addplot[mark=*,mark size=3,mark options={draw=black,fill=black},line width=1pt,blue,samples=1] coordinates {(2,0)(1,0.5)(0,0)};

\addplot[mark=*,mark size=3,mark options={draw=black,fill=black},line width=2pt,blue,samples=1] coordinates {(0,0)(2,0)};

\addplot[mark=*,mark size=3,mark options={draw=black,fill=black},line width=1pt,blue,samples=1] coordinates {(5,0)(4,0.5)};

\addplot[mark=*,mark size=3,mark options={draw=black,fill=black},line width=2pt,green,samples=1] coordinates {(3,0)(5,0)};

\addplot[mark=*,mark size=3,mark options={draw=black,fill=black},line width=1pt,red,samples=1] coordinates {(3,0)(4,0.5)};

\addplot[mark=*,mark size=3,mark options={draw=black,fill=black},line width=2pt,blue,samples=1] coordinates {(6,0)(8,0)};

\addplot[mark=*,mark size=3,mark options={draw=black,fill=black},line width=1pt,blue,samples=1] coordinates {(8,0)(7,0.5)};

\addplot[mark=*,mark size=3,mark options={draw=black,fill=black},line width=1pt,red,samples=1] coordinates {(6,0)(7,0.5)};

\end{axis}
\end{tikzpicture} \\
	
	\label{pmpa}
	\caption{The patterns $K_3^M$, $K_3^R$, and $K_3^{(2)}$, respectively.}
\end{figure}

Regarding the monochromatic pattern $K_3^M$, and assuming that $n$ is sufficiently large, it is known that the balanced, complete bipartite Tur\'an graph $T_2(n)$ is the unique $(r,K_3^M)$-extremal graph if $r\in \{2,3\}$~\cite{alon,yuster}, and that $T_2(n)$ is not $(r,K_3^M)$-extremal for $r\geq 4$. Extremal configurations are known for $r=4$~\cite{yilma}, for $r\in \{5,6\}$~\cite{botler} and for $r=7$~\cite{PS2022}. We should note that, for $r=5$, this problem admits several non-isomorphic configurations that achieve the extremal value in an asymptotic sense. The exact extremal graphs are not known for all sufficiently large $n$. Moreover, even though the extremal configurations are not known for larger values of $r$, structural properties of these configurations have been studied in~\cite{PS2021,PS2022}. 

Regarding the rainbow pattern $K_3^R$, again assuming that $n$ is sufficiently large, it is known that the complete graph $K_n$ is $(r,K_3^M)$-extremal for $r\in \{2,3\}$~\cite{baloghli}, and that the Tur\'{a}n graph $T_2(n)$ is $(r,K_3^M)$-extremal for $r \geq 4$~\cite{baloghli,rainbow_triangle}. We also refer the reader to~\cite{BBH20} for related results.

The first results about the pattern $K_3^{(2)}$ have been obtained by two of the current authors~\cite{2coloredlagos}, who showed that $T_2(n)$ is $(r,K_3^{(2)})$-extremal for $2 \leq r \leq 12$ and sufficiently large $n$. They also observed that this conclusion cannot be extended to $r \geq 27$, as the following example illustrates. Let $n$ be a positive integer (for the sake of the argument, assume that it is divisible by $4$), and consider the complete, balanced $4$-partite Tur\'an graph $T_4(n)$ with vertex partition $V_1\cup\cdots\cup V_4$, where each class has size $n/4$. Let $C$ be a set of 27 colors and partition $C$ as $C_1\cup C_2\cup C_3$, where $|C_i|=9$ for $1\leq i\leq 3$. Consider the colorings of $T_4(n)$ that assign colors in $C_1$ to edges between $V_1$ and $V_2$ and to edges between $V_3$ and $V_4$; colors in $C_2$ to edges between $V_1$ and $V_3$ and to edges between $V_2$ and $V_4$; colors in $C_3$ to edges between $V_1$ and $V_4$ and to edges between $V_2$ and $V_3$. Note that no copy of $K_3^{(2)}$ may appear in such a coloring (indeed, all triangles are rainbow), and that the number of such colorings is equal
$$9^{\frac{6n^2}{16}}=27^{\frac{n^2}{4}}=c_{27,K_3^{(2)}}(T_2(n)).$$
Moreover, other colorings of $T_4(n)$ may be produced, for instance by choosing a different partition $C_1\cup C_2\cup C_3$. This construction shows that $T_2(n)$ is not $(r,K_3^{(2)})$-extremal for any $r\geq 27$.

In this paper, we prove that the result in~\cite{2coloredlagos} may be extended all the way to $r=26$, as we now state.
\begin{theorem}\label{theorema:main}
Given a number $r$ of colors satisfying $2 \leq r \leq 26$, there exists $n_0$ such that, for every $n \geq n_0$ and every $n$-vertex graph $G$, we have
\begin{equation}\label{eq_main}
c_{r,K_3^{(2)}}(G) \leq r^{\ex(n,K_3)}.
\end{equation}
Moreover, equality holds in~\eqref{eq_main} for $n \geq n_0$ if and only if $G$ is isomorphic to the bipartite Tur\'{a}n graph $T_2(n)$.
\end{theorem}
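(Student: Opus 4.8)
The plan is to use the container/entropy method that underlies most Erd\H{o}s--Rothschild results. First I would fix $r\le 26$ and, for an $n$-vertex graph $G$, let $\mathcal{C}$ denote the set of $K_3^{(2)}$-free $r$-colorings of $G$. The standard first step is a stability-type reduction: show that if $|\mathcal{C}|$ is close to $r^{\ex(n,K_3)}$, then $G$ must be close (in edit distance) to $T_2(n)$. Concretely, one applies the hypergraph container lemma (or a direct Szemer\'edi-regularity argument) to the $3$-uniform hypergraph whose vertices are the edges of $K_n$ and whose hyperedges are the triangles, encoding each $K_3^{(2)}$-free coloring by a bounded number of ``fingerprint'' sets; this yields that $|\mathcal{C}| \le r^{(1+o(1))\,\mathrm{ex}(n,K_3)}$ for all $G$, and that colorings essentially live on bipartite-like subgraphs. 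This part is largely inherited from~\cite{2coloredlagos} and I would cite it rather than redo it.

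The heart of the argument is the \emph{exact} step: assuming $G$ is within $\delta n^2$ edges of $T_2(n)$, show $c_{r,K_3^{(2)}}(G) \le r^{\lfloor n^2/4\rfloor}$ with equality only for $T_2(n)$ itself. Write $V(G)=A\cup B$ for the near-balanced bipartition minimizing the number of ``bad'' edges, and split each coloring according to how it behaves on the (few) edges inside $A$ and inside $B$. For a coloring $\chi$, the edges inside the parts that receive a color, together with their endpoints, severely constrain $\chi$ on the crossing edges: whenever an internal edge $uv\subseteq A$ is colored, every common neighbor $w\in B$ forces the triangle $uvw$ not to be of type $K_3^{(2)}$, so the two crossing edges $uw,vw$ must either repeat the color of $uv$ or be rainbow with it — in either case the number of admissible color pairs on $\{uw,vw\}$ drops from $r^2$ to roughly $3r-2$. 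Summing a weight of $(3r-2)/r^2 < 1$ over the common neighborhood (which has size $\approx n/2$) shows each used internal edge costs a factor $r^{-\Omega(n)}$, so colorings using any internal edge are negligible; this is where $r\le 26$ enters, because the construction on $T_4(n)$ shows the analogous local count becomes favorable once $r\ge 27$ (the relevant threshold being governed by comparing $r$ against $27=3^3$). A careful accounting — essentially bounding the number of colorings in which a given set of internal edges is ``active'' and then summing over all such sets via a convexity/entropy estimate — reduces the count to colorings that are proper with respect to $A\cup B$, of which there are at most $r^{e(A,B)}\le r^{\lfloor n^2/4\rfloor}$, with equality forcing $G=T_2(n)$.

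I expect the main obstacle to be making the per-internal-edge loss rigorous and \emph{simultaneous}: a single internal edge is cheap to kill, but one must control colorings with many internal edges whose forced constraints overlap, and ensure the union bound over the $2^{O(\delta n^2)}$ possible configurations of active internal edges does not swamp the $r^{-\Omega(n)}$ gain per edge. The clean way is a link-graph / supersaturation argument: if $G$ has $t$ internal edges, a positive fraction of them lie in many triangles of $G$, so one extracts $\Omega(t)$ internal edges with pairwise-disjoint ``witness'' structure, gets an honest $\big((3r-2)/r^2\big)^{\Omega(tn)}$ factor, and checks this beats $\binom{n^2}{t}$ for all $t\ge 1$. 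A secondary technical point is sharpening the stability constant $\delta$ enough that the crude bounds from the container step feed correctly into the exact step; this is routine but fiddly, and I would isolate it as a preliminary lemma before the main case analysis.
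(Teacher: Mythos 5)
There is a genuine gap, and it sits in the step you propose to cite away. You write that the stability/container phase ``is largely inherited from~\cite{2coloredlagos}''; but that reference only establishes the relevant bound for $2\le r\le 12$, and extending it to $13\le r\le 26$ is precisely the content of this paper. The constraint $r\le 26$ does \emph{not} enter where you place it (in the exact step, via a local count around internal edges of a near-bipartition): it enters in the stability step, because for $r\ge 27$ the graph $T_4(n)$ --- which is far from bipartite --- already admits $27^{n^2/4}=r^{\ex(n,K_3)}$ colorings, so no stability statement can hold. The paper's new work is an induction on cliques of the multicolored cluster graph combined with linear programs bounding $\prod_e |L_e|$ (Lemmas~\ref{lemma:claim13}--\ref{lemma26:s2<27}); the threshold surfaces there, e.g.\ in the inequality $(r/3)^3<r^{2-8\alpha}$, which fails exactly at $r=27$. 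Conversely, the exact step you devote most of your effort to is outsourced in the paper to \cite[Lemma~4.4]{eurocomb15} (Theorem~\ref{exact}), which applies for all $r\ge 3$ once stability is known; reproving it by hand is possible but is not where the difficulty lies.

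A secondary but concrete error: your local count is inverted. If $uv$ has color $c$ and $w$ is a common neighbor, the admissible pairs $(\chi(uw),\chi(vw))$ are the monochromatic one plus the $(r-1)(r-2)$ rainbow ones, i.e.\ $r^2-3r+3$ pairs; the quantity near $3r-2$ is the number of \emph{forbidden} pairs. So the per-common-neighbor factor is $1-(3r-3)/r^2$, not $(3r-2)/r^2$. The qualitative conclusion (an internal edge costs $e^{-\Omega(n/r)}$) survives, and your concern about making these losses simultaneous against the $\binom{n^2}{t}$ union bound is legitimate --- but even if you resolve it, you would only have rederived the known reduction from stability to exactness, leaving the actual theorem unproved for $13\le r\le 26$.
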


The work in \cite[Lemma~4.4]{eurocomb15} implies that to prove Theorem~\ref{theorema:main}, it suffices to show a related stability result establishing that any $n$-vertex graph with a `large' number of $r$-colorings must be `almost bipartite'. For a formal statement of this stability result, given a graph $G=(V,E)$ and a set $W \subseteq V$, we write $e_G(W)$ for the number $|E(G[W])|$ of edges in the subgraph of $G$ induced by $W$. 
\begin{lemma} \label{lemma:main_result}
Let $2 \leq r \leq 26$.  For any fixed $\delta > 0$, there exists $n_0$ such that the following holds for all $n\geq n_0$. If $G=(V,E)$ is an $n$-vertex graph such that
$$ c_{r,K_3^{(2)}}(G) \geq r^{\ex(n, K_3)},$$  
then there is a partition $V = W_1 \cup  W_2$ of its vertex set such that $e_G(W_1) + e_G(W_2) \leq \delta n^2$.
\end{lemma}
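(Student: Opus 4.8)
The natural strategy is to combine a counting/entropy argument with the Szemer\'edi regularity lemma, in the now-standard spirit of Erd\H{o}s–Rothschild-type problems. The plan is to argue by contradiction: suppose $G$ has at least $r^{\ex(n,K_3)}$ many $K_3^{(2)}$-free $r$-colorings but does \emph{not} admit a bipartition $V=W_1\cup W_2$ with $e_G(W_1)+e_G(W_2)\le \delta n^2$. First I would apply the regularity lemma (with parameters chosen in terms of $\delta$) to obtain an $\eps$-regular partition of $V$ into clusters, and pass to the cluster graph $R$ in which a pair of clusters is joined by an edge when the corresponding bipartite graph is $\eps$-regular of density at least some small $d$. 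A routine cleaning argument shows that almost all $K_3^{(2)}$-free $r$-colorings of $G$ induce, by a majority/plurality vote on each regular pair, a coloring-like structure on $R$; since $G$ is far from bipartite, the assumption on $e_G(W_1)+e_G(W_2)$ forces $R$ to contain many edges \emph{inside} the two parts of any bipartition, and in particular (by a supersaturation/Ramsey-type argument for $K_3^{(2)}$ on $R$) one finds robustly many triangles in $R$.

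The heart of the argument is then a local counting bound: for a single regular triangle of clusters of size $m$ each, one must bound the number of ways to $r$-color the three bipartite graphs between them so that no copy of $K_3^{(2)}$ appears. I would formalize this via a lemma stating that the number of such colorings of a triangle of dense regular pairs on $3m$ vertices is at most $\big(r^{3}\cdot\beta\big)^{m^2}$ for some $\beta<1$ depending only on $r\le 26$ — this is exactly where the threshold $r=26$ versus $r=27$ enters, mirroring the $T_4(n)$ construction: with $27=3^3$ colors a rainbow-only scheme on a triangle gains nothing, but with $r\le 26$ colors avoiding $K_3^{(2)}$ on a triangle is strictly costly per edge. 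Given such a per-triangle deficiency, each of the linearly-many (in the number of clusters, hence quadratically-many in $n$ after scaling) edge-disjoint or near-disjoint regular triangles contributes a multiplicative loss of $\beta^{\Theta(m^2)}$, so the total count is at most $r^{\ex(n,K_3)}\cdot \beta^{\Omega(n^2)}$, which is strictly less than $r^{\ex(n,K_3)}$ for $n$ large — a contradiction. Assembling the bookkeeping (handling the edges of $G$ not captured by regular pairs, the colors assigned "against the vote", and the overcount from choosing which plurality-structure on $R$ we are in) costs the usual $r^{o(n^2)}$ factors, which are absorbed by $\beta^{\Omega(n^2)}$.

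The main obstacle I anticipate is precisely the sharp local counting lemma for a triangle of regular pairs with the \emph{optimal} constant $\beta<1$ valid for all $r\le 26$: one needs to show that $K_3^{(2)}$-free $r$-colorings of $K_{m,m,m}$-like configurations number at most $(r^3\beta)^{m^2}$, and getting $\beta<1$ exactly at $r=26$ requires a delicate combinatorial analysis of which color patterns on the three bipartite blocks are simultaneously $K_3^{(2)}$-free. This is likely reduced to an optimization over "color schemes" on the edges of a triangle — essentially asking for the maximum, over assignments of color-sets to the three edge-classes of a triangle consistent with avoiding the two-colored pattern, of the product of the sizes, and showing this maximum is $<r^3$ when $r\le 26$ (with equality-type behavior only starting at $r=27$). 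I would expect this to be handled either by a direct case analysis leveraging the structure of $K_3^{(2)}$ (forbidding two edges of a triangle sharing a color while the third differs) or by invoking a hypergraph-container or entropy-compression refinement; in the write-up one would isolate this as a self-contained extremal lemma and then feed it into the regularity machinery above.
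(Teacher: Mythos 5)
Your overall architecture (regularity lemma, cluster graph, counting colorings against $r^{\ex(n,K_3)}$, threshold at $r=27$ coming from an optimization over color schemes on a triangle) matches the paper's, but two steps in your plan would fail as stated. First, the claim that ``$R$ far from bipartite forces robustly many triangles in $R$'' is false: a triangle-free graph can be far from bipartite (e.g.\ a blow-up of $C_5$), so no supersaturation argument produces triangles. The actual mechanism is different: one works with the \emph{multicolored} cluster graph in which each dense regular pair carries the \emph{list} of all its dense colors (a plurality vote discards exactly the information needed), observes that $K_3^{(2)}$-freeness forces the three lists on any cluster triangle to be pairwise disjoint, and then argues that either the edges with large lists (size $\geq r_1\approx r-2\sqrt r$) form a triangle-free subgraph with at least $\ex(m,K_3)-\xi m^2$ edges --- in which case F\"uredi's stability theorem plus an Alon--Yuster-type lemma yields the bipartition $W_1\cup W_2$ --- or the weighted edge count is deficient and the number of colorings falls short. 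Your proposal never produces the bipartition; stability of triangle-free graphs is the missing ingredient.

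Second, the ``multiplicative loss per near-disjoint triangle'' bookkeeping does not assemble into a global bound. The dangerous competitors are not collections of edge-disjoint triangles but cliques with overlapping triangles and mixed list sizes: for instance a cluster $K_4$ all of whose lists have size $\lfloor r/3\rfloor$ (each color a matching) contributes $(r/3)^6$ on $16$ normalized units, which also becomes competitive with $r^{1/4}$ exactly at $r=27$; similar tightness occurs for $K_5$, $K_6$ and for triangles with lists of size $\geq 4$ attached to outside vertices. The paper handles this by proving a global bound $c(\HH)=\prod_e|L_e|^{1/m^2}\leq r^{1/4-\alpha}$ for cluster graphs all of whose lists are ``small'', via an induction that repeatedly removes a maximum clique (capped at $K_6$) combined with linear programs bounding the list sizes on edges incident to a clique; separately it peels off the large-list edges one at a time with a $B(r)/r\leq r^{-\alpha}$ loss. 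Your single-triangle extremal lemma ($abc\leq(r/3)^3$ under $a+b+c\leq r$) is the easy part; the induction over cliques and the LP constraints, which are where $r\leq 26$ must be verified case by case, are absent from the proposal and cannot be replaced by a greedy triangle decomposition.
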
 

The proof of our results combines the regularity method used in \cite{alon} and~\cite{eurocomb15} with linear programming. In some previous applications of this method, the general bounds provided by linear programming were not strong enough to extend the conclusion of Lemma~\ref{lemma:main_result} to the entire range of $r$. In this paper we use an inductive component in the proof, which allows us to better exploit local restrictions and to extend the result in~\cite{2coloredlagos} to all values of $r$ for which it was conjectured to hold. The remainder of the paper is organized as follows. In the next section, we introduce the basic preliminary results and notation required. In Section~\ref{sec:main_proof}, we prove Lemma~\ref{lemma:main_result}.

\section{Notation and auxiliary tools}

In this section, we introduce some notation and auxiliary tools that will be useful for our purposes.

\subsection{Stability for Erd\H{o}s-Rothschild type problems} As mentioned in the introduction, results in~\cite{eurocomb15} ensure that if we prove the stability result stated in Lemma~\ref{lemma:main_result}, we immediately obtain our main result Theorem~\ref{theorema:main}. To describe why this is the case, we start with a definition.

\begin{definition}\label{colored_stability}
Let $F$ be a graph with chromatic number $\chi(F)=k \geq 3$ and let $P$ be a pattern of $F$. We say that the pair $(F,P)$ satisfies the Color Stability Property for a positive integer $r$ if, for every $\delta>0$, there exists $n_0$ with the following property. If $n>n_0$ and $G$ is an $n$-vertex graph such that $c_{r,P}(G) \geq r^{\ex(n,F)}$, then there exists a partition $V(G)=V_1 \cup \cdots \cup V_{k-1}$ such that $\sum_{i=1}^{k-1} e(V_i) < \delta n^2$.
\end{definition}
The authors of~\cite{eurocomb15} have proved that, under the technical conditions below, if we show that a pair $(F,P)$ satisfies the Color Stability Property for a positive integer $r$, then we may immediately conclude that the Tur\'{a}n graph $T_{k-1}(n)$ is the unique $(r,P)$-extremal graph for sufficiently large $n$. In the next statement, a pattern $P$ of $K_{k}$ is \emph{locally rainbow} if there is a vertex that is incident with edges in $k-1$ different classes of $P$. For $K_3$, the patterns $K_3^R$ and $K_3^{(2)}$ are locally rainbow patterns, but $K_3^M$ is not.
\begin{theorem}\cite[Lemma 4.4]{eurocomb15} \label{exact} 
Let  $k \geq 3$ and let $P$ be a locally rainbow pattern of $K_{k}$ such that $(K_{k},P)$ satisfies the Color Stability Property of Definition~\ref{colored_stability} for a positive integer 
\begin{equation*}
r \geq
\begin{cases}
3 & \textrm{ if $k=3$}\\
\lceil ek \rceil & \textrm{ if $k\geq 4$},
\end{cases}
\end{equation*}
where $e$ denotes Euler's number. Then there is $n_0$ such that every graph of order $n > n_0^2$ has at most $r^{\ex(n,K_{k})}$ distinct $(K_{k},P)$-free $r$-edge colorings. Moreover, the only graph on $n$ vertices for which the number of such colorings is $r^{\ex(n,K_{k})}$ is the Tur\'{a}n graph $T_{k}(n)$.
\end{theorem}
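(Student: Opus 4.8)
The goal is to derive Theorem~\ref{exact} from the Color Stability Property together with the standard machinery for Erd\H{o}s--Rothschild problems. The approach is the one developed in~\cite{eurocomb15}: combine the stability input with a supersaturation/stability argument for the underlying Tur\'{a}n problem and a careful counting step that upgrades ``almost $T_{k}(n)$'' to ``exactly $T_{k}(n)$''. First I would set up the parameters: given the Color Stability Property for the value of $r$ in the hypothesis, fix a small $\delta>0$ (to be chosen at the very end in terms of $k$ and $r$) and let $n$ be large. Let $G$ be an $n$-vertex graph with $c_{K_{k},P}(G)\ge r^{\ex(n,K_{k})}$; since $T_{k}(n)$ itself admits at least $r^{\ex(n,K_{k})}$ such colorings (color each edge freely, no copy of $K_k$ at all), it suffices to show that any such $G$ is isomorphic to $T_{k}(n)$ and that it has strictly fewer than $r^{\ex(n,K_{k})}$ colorings otherwise. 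By the Color Stability Property there is a partition $V(G)=V_1\cup\cdots\cup V_{k-1}$ with $\sum_i e(V_i)<\delta n^2$.

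\textbf{From almost-extremal to exactly Tur\'{a}n.} The core of the argument is a local cleaning step. Using that the number of $P$-free colorings is huge, one shows that almost every vertex $v$ must ``see'' essentially all colors on edges joining it across the partition — otherwise deleting $v$ and re-adding it would lose a factor that cannot be compensated. Concretely, one bounds $c_{K_{k},P}(G)$ by summing over the colorings of the (few) edges inside the parts and the edges not respecting a balanced $(k-1)$-partition, times the number of extensions to the remaining edges. The key inequality is that if $G$ has an edge inside some $V_i$, or a vertex whose cross-degree is not maximal, or if the partition is not balanced, then the count drops below $r^{\ex(n,K_{k})}$; this is exactly where the numerical hypothesis $r\ge\lceil ek\rceil$ (resp.\ $r\ge3$ for $k=3$) and the assumption that $P$ is \emph{locally rainbow} enter. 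Local rainbow-ness guarantees that a vertex with a ``defect'' edge configuration severely restricts the available colors on nearby edges — it forces a copy of $P$ unless many color choices are forbidden — so that the entropy lost by the defect exceeds the entropy gained from the extra edge. Iterating this cleaning over all vertices shows $G$ has no edge inside any part and the partition is balanced, i.e.\ $G\subseteq T_{k}(n)$, and a final comparison shows $G=T_{k}(n)$ exactly (a missing cross-edge also strictly decreases the count). Finally one optimizes: choosing $\delta$ small enough as a function of $k$ and $r$, and then $n_0$ large enough, makes every error term in the above estimates negligible; setting the threshold as $n>n_0^2$ (rather than $n>n_0$) absorbs the polynomial losses in the regularity/counting steps.

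\textbf{What to invoke versus what to reprove.} Since Theorem~\ref{exact} is quoted from~\cite{eurocomb15} as \cite[Lemma 4.4]{eurocomb15}, in the context of the present paper the honest ``proof'' is a citation: the statement is used as a black box, and all the work in this paper goes into verifying the hypothesis, namely the Color Stability Property, which for $(K_3,K_3^{(2)})$ and $2\le r\le 26$ is precisely Lemma~\ref{lemma:main_result}. Accordingly, I would present the proof of Theorem~\ref{exact} as a short paragraph pointing to~\cite{eurocomb15} and explaining how the two numerical regimes ($k=3$ versus $k\ge4$) arise, while emphasizing that only the $k=3$, $r\le26$ instance is needed here and that Lemma~\ref{lemma:main_result} supplies the missing hypothesis for that instance. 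If a self-contained argument is wanted instead, the sketch above is the route, with the main obstacle being the local cleaning step: quantifying precisely how much entropy a locally-rainbow pattern costs at a defective vertex, and checking that the bound $r\le ek$-type threshold is exactly the break-even point, requires a delicate convexity/counting estimate rather than a soft argument.

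\textbf{Main obstacle.} The hardest part is the entropy bookkeeping in the cleaning step: one must show that the ``gain'' from an internal edge (an extra factor of at most $r$ colors on that edge) is strictly dominated by the ``loss'' forced at the endpoints by local rainbow-ness, uniformly over all defect configurations, and that this remains true after summing over a $\delta n^2$-sized set of bad edges. Getting the constants to line up is what pins down the threshold $r\ge\lceil ek\rceil$; for $k=3$ the comparison is cruder and only $r\ge3$ is needed, which is why the statement splits into two cases.
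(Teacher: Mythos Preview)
Your proposal is correct and matches the paper's own treatment: the paper does not prove Theorem~\ref{exact} at all but simply cites it as \cite[Lemma~4.4]{eurocomb15}, exactly as you identify in your third paragraph. Your additional sketch of the stability-to-exact upgrade (cleaning defective vertices via the entropy loss forced by the locally rainbow pattern, with the threshold on $r$ arising from the break-even comparison) is a faithful outline of the argument in~\cite{eurocomb15}, but it goes beyond what the present paper does or needs.
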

This justifies why we only need to prove Lemma~\ref{lemma:main_result} to derive our exact result.  

\subsection{Regularity and embeddings} Our strategy relies on a colored version of the celebrated Szemer\'{e}di Regularity Lemma. To state it, we need some terminology. Let $G = (V,E)$ be a graph, and let $A$ and $B$ be two subsets of $V(G)$. If $A$ and $B$ are non-empty, define the edge density between $A$ and $B$ by
$$d_G(A,B) = \frac{e_G(A,B)}{|A||B|},$$
where $e_G(A,B)$ is the number of edges with one vertex in $A$ and the other in $B$. 
For $\eps > 0$ the pair $(A,B)$ is called \emph{$\eps$-regular} if, for all subsets $X \subseteq A$ and $Y \subseteq B$ satisfying $|X| \geq \eps|A|$ and $|Y| \geq \eps|B|$, we have
$$|d_G(X,Y) - d_G(A,B)| < \eps.$$
An \emph{equitable partition} of a set $V$ is a partition of $V$ into pairwise disjoint classes $V_1,\ldots,V_m$ satisfying $\arrowvert |V_i| - |V_j| \arrowvert \leq 1$ for all pairs $i,j$. An equitable partition of the vertex set $V$ of $G$ into classes $V_1,\ldots,V_m$ is called \emph{$\eps$-regular} if at most $\eps \binom{m}{2}$ of the pairs $(V_i, V_j)$ are not $\eps$-regular.

We are now ready to state a colored version of the Regularity Lemma, whose proof may be found in \cite{kosi}. For a positive integer $r$, we use the standard notation $[r] = \{1, \ldots , r\}$.
\begin{lemma} \label{lemma:regularity}
For every $\eps > 0$ and every positive integer $r$, there exists $M = M(\eps,r)$ such that the following holds. If the edges of any graph $G$ of order $n > M$ are $r$-colored $E(G) = E_1 \cup \cdots \cup E_r$, then there is a partition of the vertex set $V(G) = V_1 \cup \cdots \cup V_m$, with $1/\eps \leq m \leq M$, which is $\eps$-regular simultaneously with respect to the graphs $G_i = (V,E_i)$ for all $i \in [r]$.
\end{lemma}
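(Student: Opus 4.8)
The plan is to run the classical energy-increment proof of Szemer\'{e}di's Regularity Lemma (as in \cite{kosi}), carrying it out for the $r$ color classes simultaneously by means of a single combined energy functional. For a partition $\PP=\{V_1,\dots,V_m\}$ of $V(G)$ and a color $t\in[r]$, I would set
\[
q_t(\PP)=\sum_{1\le i<j\le m}\frac{|V_i|\,|V_j|}{n^2}\,d_{G_t}(V_i,V_j)^2
\qquad\text{and}\qquad
q(\PP)=\sum_{t=1}^{r}q_t(\PP),
\]
and observe that $0\le q(\PP)\le r/2$, since $\sum_{i<j}|V_i||V_j|\le n^2/2$ and every density lies in $[0,1]$. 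The first (routine) ingredient is monotonicity under refinement: if $\PP'$ refines $\PP$, then $q_t(\PP')\ge q_t(\PP)$ for every $t$ by a blockwise application of the Cauchy--Schwarz inequality, and hence $q(\PP')\ge q(\PP)$.

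The core step is the energy increment. Assuming an equitable partition $\PP$ into $m$ parts is \emph{not} simultaneously $\eps$-regular, there must be a color $t_0\in[r]$ for which more than $\eps\binom{m}{2}$ pairs $(V_i,V_j)$ fail to be $\eps$-regular with respect to $G_{t_0}$. For each such pair I would fix witnessing sets $X_{ij}\subseteq V_i$ and $Y_{ij}\subseteq V_j$ with $|X_{ij}|\ge\eps|V_i|$, $|Y_{ij}|\ge\eps|V_j|$ and $|d_{G_{t_0}}(X_{ij},Y_{ij})-d_{G_{t_0}}(V_i,V_j)|\ge\eps$, and then refine $\PP$ to $\PP'$ by cutting each $V_i$ along all the witnessing sets (and their complements) belonging to irregular pairs that meet $V_i$; since each part lies in at most $m-1$ such pairs, $\PP'$ has at most $m\,2^{m-1}$ parts. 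The defect form of the Cauchy--Schwarz inequality then shows that each irregular pair forces the contribution of $(V_i,V_j)$ to $q_{t_0}$ to grow by at least $\eps^4\,|V_i||V_j|/n^2$, while every other pair contributes at least as much as in $\PP$ by monotonicity; summing over the more than $\eps\binom{m}{2}$ irregular pairs, whose weights $|V_i||V_j|/n^2$ are all of order $1/m^2$ by equitability, yields $q_{t_0}(\PP')\ge q_{t_0}(\PP)+c(\eps)$ for an explicit $c(\eps)>0$ (one may take $c(\eps)=\eps^5/4$ once $n$ is large). Together with monotonicity of $q_s$ for $s\ne t_0$, this gives $q(\PP')\ge q(\PP)+c(\eps)$.

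The iteration that finishes the argument starts from an arbitrary equitable partition into $\lceil1/\eps\rceil$ parts, so that every subsequent partition refines it and therefore has at least $1/\eps$ parts. As long as the current partition is not simultaneously $\eps$-regular, I apply the energy-increment step; because $q$ stays in $[0,r/2]$ and grows by at least $c(\eps)$ each time, the procedure halts after at most $r/(2c(\eps))$ rounds with a partition that is simultaneously $\eps$-regular with respect to all the $G_t$, and iterating the growth bound $m\mapsto m\,2^{m-1}$ through that many rounds yields the bound $M=M(\eps,r)$ on the number of parts. One further layer of bookkeeping is needed to keep the partitions genuinely equitable at each stage: after every refinement one re-cuts each block into sub-blocks of a common size $\lfloor n/K\rfloor$ or $\lceil n/K\rceil$ for a suitable $K$ and spreads the $O(K)$ leftover vertices evenly, which perturbs each density --- and hence $q$ --- by $o(1)$ as $n\to\infty$, an error absorbed into the slack $c(\eps)$ once $n>M$. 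I expect the only genuinely delicate points to be this equitization and the constant-chasing in the defect Cauchy--Schwarz estimate; the multicolor feature of the statement is essentially free, because $q=\sum_t q_t$ is additive and any failure of simultaneous $\eps$-regularity is, by definition, a failure in at least one fixed color, to which the classical single-graph increment applies verbatim.
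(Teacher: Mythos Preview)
Your sketch is correct: summing the standard index/energy functionals over the $r$ colors, using monotonicity under refinement, and extracting an $\eps^5$-type increment from whichever single color witnesses the failure of simultaneous $\eps$-regularity is exactly the classical argument, and the bound $q\le r/2$ forces termination after $O(r/\eps^5)$ rounds. There is nothing to compare against here, because the paper does not prove Lemma~\ref{lemma:regularity} at all; it merely states the result and refers to~\cite{kosi} for the proof, which is precisely the energy-increment argument you have outlined.
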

A partition $V_1 \cup \cdots \cup V_m$ of $V(G)$ as in Lemma \ref{lemma:regularity} will be called a \textit{multicolored $\eps$-regular partition}. For $\eta>0$, we may define a \textit{multicolored cluster graph} $\HH(\eta)$ associated with this partition, where the vertex set is $[m]$ and $e = \{i,j\}$ is an edge of $\HH(\eta)$ if $\{V_i,V_j\}$ is a regular pair in $G$ \textit{for every} color $c \in [r]$ and the edge density of the partition is at least $\eta$ for some color $c \in [r]$. Each edge $e$ in $\HH(\eta)$ is assigned the list $L_{e}$ containing all colors for which its edge density is at least $\eta$, so that $|L_e| \geq 1$ for every edge in the multicolored cluster graph $\HH(\eta)$. Given an (edge)-coloring $\widehat{F}$ of a graph $F$, we say that a multicolored cluster graph $\HH$ contains $\widehat{F} $ if $\HH$ contains a copy of $F$ for which the color of each edge of $\widehat{F} $ is contained in the list of the corresponding edge in $\HH$.  More generally, if $F$ is a graph with color pattern $P$, we say that $\HH$ contains $(F, P)$ if it contains some coloring $\widehat{F}$ of $F$ with pattern $P$.

In connection with this definition, we shall use the following standard embedding result, which is a special case of~\cite[Lemma~2.4]{eurocomb15}.
\begin{lemma} \label{lemma:colored_subgraph} 
For every $\eta > 0$ and all positive integers $k$ and $r$, there exist $\eps = \eps (r, \eta, k) > 0$ and a positive integer $n_0(r, \eta, k)$ with the following property. Suppose that $G$ is an $r$-colored graph on $n > n_0$ vertices with a multicolored $\eps$-regular partition $V = V_1 \cup \cdots \cup V_m$ which defines the multicolored cluster graph $\HH = \HH(\eta)$. Let $F$ be a fixed $k$-vertex graph with a prescribed color pattern $P$ on $t \leq r$ classes. If $\HH$ contains $(F, P)$, then the graph $G$ also contains $(F, P)$.
\end{lemma}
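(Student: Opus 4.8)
\textbf{Plan of proof for Lemma~\ref{lemma:colored_subgraph}.}
The strategy is the standard one for embedding a bounded-size colored subgraph into a multicolored regular partition, applied here to the particular situation of the multicolored cluster graph. Fix $\eta>0$ and positive integers $k$ and $r$. Suppose $\HH=\HH(\eta)$ contains $(F,P)$; this means there is a copy of $F$ in $\HH$ on some vertices $i_1,\dots,i_k\in[m]$ and a coloring $\widehat F$ of $F$ with pattern $P$ such that, for each edge $\{i_a,i_b\}$ of this copy, the prescribed color of the corresponding edge of $\widehat F$ lies in the list $L_{\{i_a,i_b\}}$; by the definition of the list, this forces $d_{G_c}(V_{i_a},V_{i_b})\geq\eta$ for that color $c$, and moreover $\{V_{i_a},V_{i_b}\}$ is $\eps$-regular in $G_c$ (in fact in every color, by the definition of an edge of $\HH$). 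So what I must produce is a set of $k$ vertices $v_1\in V_{i_1},\dots,v_k\in V_{i_k}$, one in each of the selected clusters, such that for every edge $\{i_a,i_b\}$ of the copy of $F$, the pair $\{v_a,v_b\}$ is an edge of $G$ in the color prescribed by $\widehat F$; that copy of $F$ with that coloring then realizes $(F,P)$ in $G$.

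\textbf{Key steps.} First, I would reduce to the case where the clusters $V_{i_1},\dots,V_{i_k}$ are pairwise disjoint of size $\lfloor n/m\rfloor$ or $\lceil n/m\rceil$, which holds since the regular partition is equitable; note $|V_{i_a}|\geq n/m-1\geq n/M-1$, which is large once $n_0$ is chosen large compared to $M(\eps,r)$ (with $M$ coming from the regularity lemma, but here $m$ is already fixed by the hypothesis, so I only need $|V_{i_a}|$ large). Second, I would invoke the standard embedding / key lemma for regular pairs: one builds the embedding greedily, vertex by vertex, maintaining at each partially-embedded stage a large ``candidate set'' $C_a\subseteq V_{i_a}$ for each not-yet-embedded vertex $a$, consisting of vertices adjacent in the correct color to all already-embedded neighbors. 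The density condition $d_{G_c}(V_{i_a},V_{i_b})\geq\eta$ together with $\eps$-regularity guarantees that, whenever a new vertex is embedded, the candidate sets of its still-unembedded neighbors shrink by at most a factor of roughly $(\eta-\eps)$ (provided they are still of size at least $\eps|V_{i_b}|$), and a counting argument shows that a valid choice for the next vertex exists as long as $\eps$ is small enough relative to $\eta$ and $k$. Concretely, one chooses $\eps=\eps(r,\eta,k)$ so small that $(\eta-\eps)^{k}>\eps$ and all candidate sets stay above the $\eps|V_{i_b}|$ threshold throughout the at most $k$ rounds; this is exactly the choice of $\eps$ claimed in the statement. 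Since $F$ has only $k$ vertices and $r$ colors is fixed, the whole process terminates after $k$ steps and yields the required colored copy of $F$; choosing $n_0(r,\eta,k)$ large enough that $n/m-1>0$ still leaves candidate sets nonempty (more precisely, larger than $1$) after the shrinking completes the argument.

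\textbf{Where to be careful.} The one genuinely delicate point is bookkeeping the interaction between the $r$ colors: a single edge $\{i_a,i_b\}$ of the copy of $F$ carries exactly one prescribed color $c=\widehat F(\{a,b\})$, but $\{V_{i_a},V_{i_b\}}$ is $\eps$-regular in \emph{every} color (since it is an edge of $\HH$), so I may apply regularity in precisely the color $c$ I need for that edge, independently for each edge. Thus the multicolored setting does not really complicate the embedding: each edge of $F$ is handled in its own color, and the candidate-set updates only ever use regularity and density in the colors actually appearing on $\widehat F$. I should also make sure that the vertex chosen in $V_{i_a}$ for vertex $a$ is distinct from those chosen in other clusters — automatic since the clusters are disjoint — and that if two vertices $a,b$ of $F$ happen to be nonadjacent, no constraint is imposed between $V_{i_a}$ and $V_{i_b}$, so nothing needs to be checked there. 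Finally, since this is explicitly stated to be a special case of \cite[Lemma~2.4]{eurocomb15}, the cleanest write-up is to verify that the hypotheses of that lemma are met — a fixed colored pattern $P$ on $t\le r$ classes, a multicolored $\eps$-regular partition, and containment of $(F,P)$ in $\HH(\eta)$ — and then quote it; the main obstacle, such as it is, is simply matching the notation of that reference to the notation set up here.
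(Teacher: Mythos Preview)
The paper does not prove this lemma at all: it simply states it as ``a special case of~\cite[Lemma~2.4]{eurocomb15}'' and moves on. Your proposal correctly identifies this, and your sketch of the greedy embedding argument (candidate sets shrinking by a factor of roughly $\eta-\eps$ at each of the $k$ steps, with $\eps$ chosen so that $(\eta-\eps)^k>\eps$) is the standard Key Lemma proof and is sound; the observation that the multicolor aspect is harmless because each edge of $\widehat F$ needs regularity only in its own prescribed color is exactly the right point.
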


\subsection{Stability}
Another basic tool in our paper is a stability result for graphs.
We shall use the following theorem by F\"uredi \cite{fu15}.
\begin{theorem} \label{theorem:stability_furedi} 
Let $G = (V,E)$ be a $K_{k}$-free graph on $n$ vertices. If $|E| = \ex(n, K_{k}) - t$, then there exists a partition  $V= V_1 \cup \cdots \cup  V_{k-1}$ with $\sum_{i = 1}^{k-1} e(V_i) \leq  t$. 
 \end{theorem}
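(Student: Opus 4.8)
The plan is to prove the statement by induction on $k$, deleting the neighbourhood of a maximum-degree vertex (this is in essence Füredi's argument from \cite{fu15}). For $k=2$ the claim is immediate: a $K_2$-free graph has no edges, so $\ex(n,K_2)=0$ forces $t=0$, and the one-part partition $V_1=V$ satisfies $e_G(V_1)=0\le t$. For the inductive step with $k\ge 3$, assume the result for $k-1$. Given a $K_k$-free graph $G=(V,E)$ on $n$ vertices with $|E|=\ex(n,K_k)-t$, I would pick a vertex $v$ of maximum degree $\Delta$ and set $N=N(v)$, $R=V\setminus N$, so that $v\in R$ and $|R|=n-\Delta$. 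Since $G$ is $K_k$-free, $G[N]$ is $K_{k-1}$-free, so by Turán's theorem we may write $e_G(N)=\ex(\Delta,K_{k-1})-t'$ with $t'\ge 0$. Applying the induction hypothesis to $G[N]$ yields a partition $N=U_1\cup\cdots\cup U_{k-2}$ with $\sum_{i=1}^{k-2}e_G(U_i)\le t'$. Taking $V_i=U_i$ for $1\le i\le k-2$ and $V_{k-1}=R$ gives a partition of $V$ into $k-1$ parts with $\sum_{i=1}^{k-1}e_G(V_i)\le t'+e_G(R)$.

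It then remains to verify $t'+e_G(R)\le t$. Substituting $t'=\ex(\Delta,K_{k-1})-e_G(N)$, $t=\ex(n,K_k)-|E|$, and the identity $|E|=e_G(N)+e_G(N,R)+e_G(R)$, this inequality rearranges to
\[
\ex(\Delta,K_{k-1})+e_G(N,R)+2\,e_G(R)\ \le\ \ex(n,K_k).
\]
I would deduce this from two elementary facts. First, every $u\in R$ has $d_G(u)\le\Delta$ by the choice of $v$, while $d_G(u)=|N(u)\cap N|+d_{G[R]}(u)$; summing over $u\in R$ gives $e_G(N,R)+2\,e_G(R)\le\Delta(n-\Delta)$. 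Second, $\ex(\Delta,K_{k-1})+\Delta(n-\Delta)\le\ex(n,K_k)$, which follows by exhibiting a $K_k$-free graph on $n$ vertices realizing the left-hand side: put a Turán graph $T_{k-2}(\Delta)$ on a set $A$ of $\Delta$ vertices, let $B$ be an independent set of size $n-\Delta$, and join $A$ to $B$ completely; any clique meets $B$ in at most one vertex and meets $A$ in a clique of $T_{k-2}(\Delta)$, hence has at most $k-1$ vertices. Chaining the two inequalities closes the induction. The degenerate case $\Delta=0$ (edgeless $G$) is handled directly, or simply absorbed into the argument with $N=\emptyset$.

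The main obstacle is conceptual rather than computational: recognizing that inducting on the neighbourhood of a \emph{maximum-degree} vertex is exactly what makes the bookkeeping collapse to the single displayed inequality, and that $\ex(\Delta,K_{k-1})+\Delta(n-\Delta)\le\ex(n,K_k)$ admits a clean proof by a blow-up construction instead of by manipulating explicit formulas for Turán numbers. Once these two observations are in hand, every remaining estimate is routine, and one should double-check only the trivial boundary situations ($\Delta=0$, or $\Delta<k-2$ so that $T_{k-2}(\Delta)$ degenerates to a complete graph, both of which cause no difficulty).
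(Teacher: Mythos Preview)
The paper does not supply its own proof of this theorem; it is quoted as a known result of F\"uredi with a citation to~\cite{fu15}. Your argument is correct and is precisely F\"uredi's original proof: induct on $k$, delete the neighbourhood $N$ of a maximum-degree vertex, apply the induction hypothesis to the $K_{k-1}$-free graph $G[N]$, and place the remaining vertices $R=V\setminus N$ into a single class. The two inequalities you isolate---the degree-sum bound $e_G(N,R)+2e_G(R)\le\Delta(n-\Delta)$ and the Tur\'an-type inequality $\ex(\Delta,K_{k-1})+\Delta(n-\Delta)\le\ex(n,K_k)$ witnessed by the blow-up construction---are exactly the ingredients F\"uredi uses, and your bookkeeping is accurate. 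There is nothing to correct or compare.
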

 
We also use the following version of a simple lemma due to Alon and Yuster \cite{AY}. For completeness, we include its proof, that relies on the well known fact that any graph on $m\geq 1$ edges contains a bipartite subgraph with more than $m/2$ edges.
\begin{lemma} \label{lemma:AY} 
Let $0 < t < n^2/16$ and let $G$ be a $K_3$-free graph with $n$ vertices and with $\ex(n,K_{3})-t$ edges. If we produce a new graph $G'$ by adding at least $5t$ new edges to the graph $G$, then $G'$ contains a copy of $K_{3}$ with exactly one new edge.
\end{lemma}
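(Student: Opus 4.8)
\textbf{Proof plan for Lemma~\ref{lemma:AY}.}
The plan is to combine two elementary facts: a $K_3$-free graph on $n$ vertices is close to bipartite in the quantitative sense supplied by Theorem~\ref{theorem:stability_furedi} (with $k=3$), and any graph with $m$ edges contains a bipartite subgraph retaining more than $m/2$ of them. First I would apply Theorem~\ref{theorem:stability_furedi} to $G$: since $G$ is $K_3$-free with $\ex(n,K_3)-t$ edges, there is a partition $V=V_1\cup V_2$ with $e_G(V_1)+e_G(V_2)\le t$. Write $H$ for the set of at least $5t$ new edges added to form $G'$, and suppose for contradiction that $G'$ contains no copy of $K_3$ using exactly one edge of $H$.

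The key step is to analyze where the new edges sit relative to the partition $V_1\cup V_2$. Call a new edge \emph{crossing} if it joins $V_1$ to $V_2$ and \emph{internal} otherwise. There are at most $t$ old internal edges, so consider the graph $B$ on vertex set $V$ whose edges are precisely the crossing new edges; this is a bipartite graph between $V_1$ and $V_2$. I claim $B$ has more than $t$ edges. Indeed, if $B$ had at most $t$ edges, then the at least $5t$ new edges would include at least $4t$ internal new edges; by the bipartite-subgraph fact, the internal new edges (viewed as a graph on $V$) would contain a bipartite subgraph with more than $2t$ edges, i.e.\ a partition of $V$ refining nothing in particular under which more than $2t$ internal new edges become crossing — but I need to be a little more careful and instead argue directly: among the internal new edges, more than half, so more than $2t$ of them, have both endpoints in $V_1$ or both in $V_2$; focus on whichever of $V_1,V_2$ carries more than $t$ of them, say $V_1$. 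The old graph $G[V_1]$ has at most $t$ edges, hence its complement within $V_1$ is dense; since we have more than $t$ new edges inside $V_1$ and more than $t$ old... The cleanest route is the following: inside $V_1$ the new edges form a graph $N_1$ with more than $t$ edges (after possibly swapping $V_1,V_2$), take a bipartite subgraph $V_1=A\cup A'$ of $N_1$ with more than $t/2$ edges across, and now any edge of $G[V_1]$ together with... this still needs the old edge to form a triangle with a single new edge. Rather than pushing internal edges, I would push crossing edges, as follows.

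So the main argument runs through crossing edges. By the bipartite-subgraph fact applied to the $\ge 5t$ new edges, there is a partition $V=U_1\cup U_2$ such that more than $5t/2$ new edges cross it; replace $V_1\cup V_2$ by comparing with $U_1\cup U_2$ — actually the honest and standard argument is: since $e_G(V_1)+e_G(V_2)\le t$ and $G$ is $K_3$-free, almost all of $G$'s edges cross $V_1$–$V_2$, and the complete bipartite graph between $V_1$ and $V_2$ has $|V_1||V_2|$ edges with at least $|V_1||V_2|-\ex(n,K_3)\le t$ of them missing from $G$ (using $|V_1||V_2|\le\ex(n,K_3)=\lfloor n^2/4\rfloor$ and $e_G(V_1,V_2)\ge \ex(n,K_3)-t-t$). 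Thus the number of \emph{non-edges} of $G$ crossing $V_1$–$V_2$ is at most $2t$. Now among the $\ge 5t$ new edges, at most $t$ are old-internal positions... let me just state the counting cleanly: at most $2t$ new edges can be crossing edges (since a crossing new edge occupies a crossing non-edge of $G$, and there are at most $2t$ of those), so at least $3t$ new edges are internal to $V_1$ or to $V_2$; WLOG at least $3t/2$ lie inside $V_1$. Inside $V_1$, the old graph $G[V_1]$ has at most $t$ edges, so at least $t/2$ of these internal new edges have \emph{both} endpoints of positive degree into $V_2$ in $G$ — more to the point, pick such a new edge $xy\subseteq V_1$; each of $x,y$ is non-adjacent in $G'$ (hence in $G$) to at most the crossing non-edges at it, and the total crossing non-edge budget is $\le 2t$, so there are few "bad" vertices. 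Concretely, the set of vertices in $V_2$ non-adjacent to $x$ or to $y$ in $G$ has size at most $2t$, far less than $|V_2|\ge n/3>2t$; pick $z\in V_2$ adjacent in $G$ to both $x$ and $y$. Then $xz$ and $yz$ are edges of $G$ (old, not new), and $xy$ is a new edge, so $xyz$ is a triangle in $G'$ with exactly one new edge — contradiction.

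\textbf{Where the difficulty lies.} The only genuinely delicate point is the bookkeeping in the previous paragraph: one must verify that the "budget" of crossing non-edges of $G$ is $O(t)$ and then that the new edges are forced, in bulk, to lie inside one side, and finally that the excluded-neighborhood set in $V_2$ is small compared with $|V_2|$. This requires the hypotheses $t<n^2/16$ and the "$5t$" slack: the factor $5$ is what guarantees, after subtracting the $\le 2t$ crossing new edges and halving, that a substantial ($\ge 3t/2$, in particular positive and in fact comfortably larger than the $\le 2t$ non-edge budget) number of new edges survives inside a single part, while $t<n^2/16$ ensures $|V_1|,|V_2|\ge n/3$ so that $|V_2|>2t$ and a common $G$-neighbor $z$ exists. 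The bipartite-subgraph fact is used exactly once, to pass from "$\ge 3t$ internal new edges" to "$\ge 3t/2$ inside one part"; everything else is counting. I would present it in the order: (i) invoke Füredi's stability to fix $V_1\cup V_2$; (ii) bound crossing non-edges of $G$ by $2t$; (iii) deduce $\ge 3t$ new edges are internal, hence $\ge 3t/2$ inside some $V_i$; (iv) pick such a new edge and a common old neighbor on the other side; (v) conclude.
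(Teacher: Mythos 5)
Your reduction to the bipartition $V=V_1\cup V_2$ via Theorem~\ref{theorem:stability_furedi}, the bound of $2t$ on the number of crossing non-edges of $G$, and the deduction that at least $3t$ new edges are internal (hence at least $3t/2$ inside one part) are all correct and close in spirit to the first half of the paper's argument. The gap is in your final steps (iv)--(v). You pick a new edge $xy\subseteq V_1$ and argue that the set of vertices of $V_2$ non-adjacent in $G$ to $x$ or to $y$ has size at most $2t$, ``far less than $|V_2|\ge n/3>2t$''. But the hypothesis is only $t<n^2/16$, so $2t$ may be as large as $n^2/8$, which dwarfs $|V_2|\le n$; the inequality $|V_2|>2t$ holds only when $t=O(n)$, whereas the lemma is invoked in the paper with $t$ of order $\xi m^2$, i.e.\ quadratic in the number of vertices. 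Nor can the step be repaired just by choosing $xy$ more cleverly among the $\ge 3t/2$ candidates: if every internal new edge $xy$ had $\overline{N}(x)\cup\overline{N}(y)\supseteq V_2$ (writing $\overline{N}(v)$ for the non-neighbourhood of $v$ in $V_2$), then each such edge would have an endpoint $v$ with $|\overline{N}(v)|\ge |V_2|/2$; there are at most $4t/|V_2|$ such vertices, which caps the number of these edges only by roughly $4t|V_1|/|V_2|\approx 4t$, and this exceeds $3t/2$. So the local ``pick an edge and find a common neighbour'' argument does not close.

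The paper avoids this by arguing globally. It takes a maximum bipartite subgraph $E'$ of $G$ (so $|E''|\le t$ for $E''=E\setminus E'$ by Theorem~\ref{theorem:stability_furedi}), shows as you do that at least $4t-|E''|$ new edges are internal, then extracts a maximum bipartite subgraph $F''$ of the union of these internal new edges with $E''$, so that $|F''|>2t$. The graph $(V,E'\cup F'')$ then has more than $\ex(n,K_3)$ edges, hence contains a triangle by Mantel's theorem, and the bipartiteness of $(V,F'')$ forces that triangle to contain exactly one internal edge, which must be new because $G$ itself is triangle-free. If you want to keep your local framing, you need a selection device of comparable strength (Mantel applied to an auxiliary graph, or a substantially sharper averaging argument); as written, step (iv) fails throughout the regime $t\gg n$ that the lemma must cover.
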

\begin{proof}
Let $G=(V,E)$ be a $K_3$-free graph with $n$ vertices and with $\ex(n,K_{3})-t$ edges and let $F$ be a set of at least $5t$ new edges. Let $E' \subset E$ be a set with maximum cardinality such that $G'=(V,E')$ is bipartite. Let $V=V_1 \cup V_2$ be the bipartition of $G'$ and let $E''=E \setminus E'$. By Theorem~\ref{theorem:stability_furedi}, we have $|E''| \leq t$. 

The number of edges of $F$ with one end in $V_1$ and the other  in $V_2$ is at most 
$$|V_1||V_2|-|E'| \leq \ex(n,K_3)-|E'| \leq |E|+t-|E'| = t+|E''|.$$
So, if $F'$ denotes the subset of $F$ containing all edges with both ends in $V_1$ or with both ends in $V_2$, we have
$$|F'| \geq 5t - (t+|E''|)=4t-|E''|.$$ 
To conclude our proof, consider a maximum subset $F'' \subset F'\cup E''$ such that $G''=(V,F'')$ is bipartite. It is well known that 
$$|F''|>\frac{|F'\cup E''|}{2}\geq \frac{(4t-|E''|)+|E''|}{2}=2t.$$  

To conclude the proof, consider the graph $G^\ast=(V,E' \cup F'')$. Observe that the edges of $G^\ast$ with one end in $V_1$ and the other in $V_2$ are in $E'$, and the others are in $F''$. By the above considerations,  
$$|E'|+|F''| > \ex(n,K_{3})-(t+|E''|)+2t \geq \ex(n,K_{3}),$$ 
so that $G^\ast$ contains a triangle by Tur\'{a}n's Theorem. We claim that this triangle contains exactly one edge in $F''$. It is obvious that it must contain at least one edge in $F''$ and that it cannot contain an edge with both ends in $V_1$ and another with both ends in $V_2$.  Moreover, if the triangle contained two edges with all ends in one of the sides of the bipartition, say in $V_1$, the third edge of the triangle would also connect vertices in $V_1$, so that the three edges would lie in $F''$, contradicting the fact that $G''=(V,F'')$ is bipartite.
\end{proof}


\section{Proof of Lemma~\ref{lemma:main_result}}\label{sec:main_proof}

In this section, we will prove Lemma~\ref{lemma:main_result}. To this end, fix $r \in \{2,\ldots,26\}$ and let $\delta>0$. With foresight, we consider auxiliary constants $\alpha$, $\xi$ and $\eta > 0$ such that
\begin{center}
\begin{equation}\label{eq:quantification}
\alpha=\frac{1}{1000}, \ \   \xi < \dfrac{\delta}{22}, \ \ \xi > 10^4\cdot H((r+1)\eta) + (10^4+1)\cdot  (r+1)\cdot \eta \ \ \text{ and } \ \ \eta < \dfrac{\delta}{2r},
\end{equation}
\end{center}
where $H \colon [0,1] \to [0,1]$ is the \emph{entropy function} given by $H(0) = H(1) = 0$ and by $H(x) = -x \log_2 x - (1-x) \log_2(1-x)$ for $x \in (0,1)$. 

Let $\varepsilon = \varepsilon(r,\eta, 3) > 0$ and $n_0 = n_0(r,\eta,3)$ satisfy the assumptions of Lemma~\ref{lemma:colored_subgraph}, and assume without loss of generality that $\varepsilon < \min\{\eta/2,1/n_0\}$. Fix $M = M(r,\varepsilon)$ given by Lemma~\ref{lemma:regularity}.

Given an $n$-vertex graph $G$ such that $n \geq n_0$, let $\C=\C(G)$ the set of all $K_3^{(2)}$-free $r$-colorings of $G$. By Lemma \ref{lemma:regularity} and the discussion following it, each coloring $\Phi \in \C$ is associated with a multicolored $\eps$-regular partition $V= V_1 \cup \cdots \cup V_m$, where $1 / \eps \leq m \leq M$. This partition is in turn is associated with a multicolored cluster graph $\HH=\HH(\eta)$. Our choice of parameters implies that $\HH$ must be $K_3^{(2)}$-free, otherwise the coloring of $G$ leading to it would contain a copy of $(K_3,\leq 2)$ by Lemma~\ref{lemma:colored_subgraph}.

Towards an upper bound on the size of $\C$, we determine an upper bound on the number of colorings that give rise to a fixed partition $ V_1 \cup \cdots \cup V_m$ and to a fixed multicolored cluster graph $\HH$. We first consider the edges of $G$ whose colors are not captured by the lists $L_e$ associated with edges $e\in E(\HH)$. Lemma~\ref{lemma:regularity} ensures that, for each color in $[r]$, there are at most $\eps \binom{m}{2}$ irregular pairs with respect to the partition $V = V_1 \cup \dots \cup V_m$, hence at most 
\begin{align}
r \cdot \eps \cdot \binom{m}{2} \cdot \left(\frac{n}{m}\right) ^ 2 \leq r \eps \cdot n ^ 2 \leq \frac{r \eta}{2} \cdot n ^ 2 \label{eq:irregular}
\end{align}
edges of $G$ are contained in an irregular pair with respect to one of the colors. Moreover, there are at most
\begin{align}
m \cdot \left(\frac{n}{m}\right) ^ 2 = \frac{n ^ 2}{m} \leq \eps n ^ 2 \leq \frac{\eta}{2} \cdot n ^ 2 \label{eq:inside}
\end{align}
edges with both ends in the same class $V_i$. Finally, we consider edges $f$ whose endpoints are in distinct classes $V_i$ and $V_j$ and such that the edge density of the edges with the color of $f$ is less than $\eta$ with respect to this pair. The number of edges of this type is at most
\begin{align}
r \cdot \eta \cdot \binom{m}{2} \cdot \left(\frac{n}{m}\right) ^ 2 \leq \frac{r \eta}{2} \cdot n ^ 2. \label{eq:density}
\end{align}
Using \eqref{eq:irregular}, \eqref{eq:inside} and \eqref{eq:density} gives at most $(r+1) \eta n^2$ edges of these three types. 

Clearly, the remaining edges of $G$ have endpoints in pairs that are regular for every color and must be assigned a color
that is dense with respect to the pair where its endpoints lie, i.e., their color must lie in the list of the corresponding edge of $\HH$. This means that the number of elements of $\mathcal{C}$ that can be associated with a given multicolored partition $V_1 \cup \cdots \cup V_m$ and a given $m$-vertex multicolored cluster graph $\HH$ is bounded above by 
\begin{align} \label{eq:multicolor_bound}
 \binom{n^2}{(r+1) \eta n^2} \cdot r^{(r+1) \eta n^2} \cdot \left( \prod_{j=1}^{r} j^{e_j(\HH)} \right)^{\left( \frac{n}{m} \right)^2},
\end{align}
where $e_j(\HH)$ denotes the number of edges of $\HH$ whose lists have size equal to $j$.
Here, we assume that $m$ divides $n$ to avoid dealing with lower order terms that can be absorbed into the error term $r^{(r+1) \eta n^2}$.
There are at most $M^n$ partitions of $V$ on $m \leq M$ classes and at most $2^{r\binom{m}{2}}$ multicolored cluster graphs with vertex set $[m]$.  Moreover, it is well-known that the entropy function satisfies
$$\binom{n^2}{(r+1) \eta n^2} \leq 2^{H((r+1)\eta n^2)}.$$

Thus, summing the upper bound~\eqref{eq:multicolor_bound} over all partitions and all corresponding multicolored cluster graphs, the number of $K_3^{(2)}$-free edge colorings of $G$ is at most
\begin{align}
M^n \cdot 2^{r M^2/2} \cdot 2 ^ {H((r+1) \eta) n^2} \cdot r^{(r+1) \eta n^2} \cdot \max_{\HH} \left( \prod_{j=1}^{r} j ^ {\frac{e_j(\HH)}{|V(\HH)|^2}} \right)^{n^2}. \label{eq:coloring_of_g}
\end{align}

Our aim is to find an upper bound on \eqref{eq:coloring_of_g}. The term $j=1$ in the product in \eqref{eq:coloring_of_g} does not affect the result. So, we define $\mathcal{S}=\mathcal{S}(G)$ to be the set of all subgraphs of multicolored cluster graphs $\HH$ of $G$ such that all edges are associated with lists of size at least two. Abusing the terminology, we also call the subgraph given by edges whose lists have size at least 2 the multicolored cluster graph associated with a coloring of $G$. Note that $\HH \in \mathcal{S}$ is $K_3^{(2)}$-free if and only if all lists associated with edges on a triangle are mutually disjoint. Given $\HH \in \mathcal{S}$, we let 
\begin{equation}\label{def_cH}
c(\HH)=\prod_{e\in E(\HH)}|L_e|^{\frac{1}{|V(\HH)|^2}}.
\end{equation}
We wish to find $\max_{\HH \in \mathcal{S} } c(\HH)$ to bound~\eqref{eq:coloring_of_g}.

As discussed in~\cite{2coloredlagos}, this is easy to do for the case of $r\leq 12$ colors (for completeness, we present the proof for $r\leq 12$ as an appendix). In the remainder of the proof, we focus on the remaining values of $r$ and consider the functions
\begin{eqnarray*}
r_0&=&r_0(r)=
\begin{cases} 
6& \textrm{ if } r=13\\
\lfloor r-2\sqrt{r}\rfloor& \textrm{ if } 14\leq r \leq 26.
\end{cases} \\
r_1&=&r_1(r)=r_0+1.  
\end{eqnarray*}
For $r\geq 13$, the crucial property in the definition of $r_1$ is that 
\begin{equation}\label{eq:Ar}
A(r)=\left\lfloor (r-r_1)/2 \right\rfloor \cdot \left\lceil (r-r_1)/2\right\rceil < r.
\end{equation}
Note that,  $A(r)\leq (r-r_1)^2/4$ and, as both factors of $A(r)$ are integers, we have $A(r) \leq r-1$.
The validity of~\eqref{eq:Ar}  may be verified directly for $r=13$, and if $r \geq 14$ we have $r-r_1=r-\lfloor r-2\sqrt{r}\rfloor-1<r-(r-2\sqrt{r}-1)-1=2\sqrt{r}$. 

Another important property of $r_1$ is that $2r_1 \geq r$. This may be verified directly for $r\in\{13,14,15\}$. For $r\geq 16$, we have 
$$2r_1 \geq 2(r-2\sqrt{r}) \geq r,$$
where the last inequality is equivalent to $\sqrt{r}\geq 4$. 

Our conclusion is a consequence of the following. 
\begin{claim}\label{claim13}
There is a multicolored cluster graph $\HH$ such that
\begin{align*}
e_{r_1}(\HH) + \dots + e_{r}(\HH) \geq \ex(m, K_3) - \xi m ^ 2.
\end{align*}
\end{claim}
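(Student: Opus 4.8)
The plan is to analyze the optimization problem $\max_{\HH\in\SS}c(\HH)$ directly, reducing it to an averaged/local statement about a single edge-weighted cluster graph $\HH$, and then to extract from an (almost-)optimal $\HH$ the claimed lower bound on $e_{r_1}(\HH)+\dots+e_r(\HH)$. First I would observe that, since $c_{r,K_3^{(2)}}(G)\geq r^{\ex(n,K_3)}=r^{(1/4+o(1))n^2}$, the bound~\eqref{eq:coloring_of_g} forces a cluster graph $\HH$ with $c(\HH)\geq r^{1/4-o(1)}$, i.e.\ $\sum_{e\in E(\HH)}\log|L_e|\geq (\tfrac14-o(1))|V(\HH)|^2\log r$ after absorbing the entropy and error terms using the quantification in~\eqref{eq:quantification}. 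So it suffices to show that any $K_3^{(2)}$-free edge-list graph $\HH$ on $m$ vertices whose list-weight $c(\HH)$ is this large must have at least $\ex(m,K_3)-\xi m^2$ edges with list size $\geq r_1$.

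The key structural input is that $\HH$ being $K_3^{(2)}$-free means that on every triangle of $\HH$ the three lists are pairwise disjoint, hence the three list sizes sum to at most $r$. The natural move is then a weighting/Lagrangian argument in the spirit of Motzkin--Straus: assign to each vertex the uniform weight $1/m$ and bound $\sum_e\log|L_e|$ by exploiting that list sizes on triangles are constrained. I would split $E(\HH)$ into the set $E_{\mathrm{big}}$ of edges with $|L_e|\geq r_1$ and the set $E_{\mathrm{small}}$ with $|L_e|\leq r_0$. The contribution of $E_{\mathrm{small}}$ edges to $c(\HH)$ is at most $r_0^{|E_{\mathrm{small}}|/m^2}$, while the best possible contribution of $E_{\mathrm{big}}$ edges is bounded by noting that they cannot contain a triangle among themselves with total list size exceeding $r$ — but two big edges on a triangle already use at least $2r_1\geq r$ colors, so they share a color and cannot lie on a common triangle with a third big edge unless the lists overlap heavily; more importantly the third edge of such a triangle has list size at most $r-2r_1\le r_1$, which by $2r_1\ge r$ is quite restrictive. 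From this one deduces that the big edges essentially form a triangle-free (hence, by Turán, $\le \ex(m,K_3)$-edge) graph together with a controlled defect, and any deficiency below $\ex(m,K_3)$ in $|E_{\mathrm{big}}|$ must be compensated by small edges, each contributing at most $\log r_0$ versus the $\log r$ one needs on average; a convexity estimate, using $r_0<r$ and the inequality~\eqref{eq:Ar} bounding the gain from the "$A(r)$-configuration", shows that losing a constant fraction $\xi$ of the big edges strictly lowers $c(\HH)$ below $r^{1/4}$, a contradiction.

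The main obstacle I anticipate is making the last step quantitatively tight enough to reach $r=26$: the crude bound "$c(\HH)\le$ product over edges" loses too much, so one must carefully balance the $H((r+1)\eta)$ and $(r+1)\eta\log r$ error terms (controlled by~\eqref{eq:quantification}) against the deficiency $\xi m^2$, and one must use the specific arithmetic of $r_0=\lfloor r-2\sqrt r\rfloor$ — in particular~\eqref{eq:Ar}, $A(r)<r$, and $2r_1\ge r$ — rather than generic bounds, since this is exactly the place where the earlier papers' linear-programming bounds were not strong enough. This is presumably where the "inductive component" mentioned in the introduction enters: I expect the proof of Claim~\ref{claim13} to proceed by induction on $r$ (or on $m$), peeling off a densest big-edge-subgraph, applying the claim to the residual graph with fewer colors, and recombining; the base cases would be handled by the $r\le 12$ analysis recalled in the appendix. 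I would therefore set up the induction hypothesis so that the residual cluster graph, after removing a near-bipartite block of big edges, still satisfies the hypotheses of a lower-$r$ instance, and then the Alon--Yuster-type Lemma~\ref{lemma:AY} together with F\"uredi's stability Theorem~\ref{theorem:stability_furedi} would be used to control how the edge count degrades at each step.
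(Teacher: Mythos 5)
Your reduction to a statement about a single cluster graph, and your split of $E(\HH)$ into edges with $|L_e|\geq r_1$ and edges with $|L_e|\leq r_0$, match the paper's setup (its ``blue'' and ``green'' edges), and you correctly isolate $2r_1\geq r$ and $A(r)<r$ as the relevant arithmetic. However, the two steps that actually carry the proof are missing, and the substitute you offer for one of them would fail. Your claim that a deficiency of $\xi m^2$ big edges is compensated by small edges ``each contributing at most $\log r_0$ versus the $\log r$ one needs on average,'' so that convexity pushes $c(\HH)$ below $r^{1/4}$, is not a valid accounting: the small-edge graph need not be triangle-free (for $r=26$ a triangle whose three lists are disjoint and of size $8$ is perfectly $K_3^{(2)}$-free), so it may have $\Theta(m^2)$ more edges than $\ex(m,K_3)$, and since $r_0^{1/2}>r^{1/4}$ throughout the range (e.g.\ $15^{1/2}\approx 3.87$ versus $26^{1/4}\approx 2.26$), per-edge bookkeeping against $\log r_0$ cannot close the argument. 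Controlling the all-green part is exactly the content of the paper's Lemma~\ref{lemma:claim13}, which is proved by induction on the number of \emph{vertices} (not on $r$): one peels off a maximum clique of size at most $6$, bounds the external list-products via Lemmas~\ref{gen_claim}--\ref{lemma(ck)}, and handles the $K_4$-free base class $\mathcal{S}_1$ by a small linear program. Your proposed induction on $r$, ``applying the claim to the residual graph with fewer colors'' with base cases from the $r\leq 12$ appendix, is not set up anywhere and is not obviously set-up-able, since deleting big edges does not shrink the palette available to the remaining edges.

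Second, even granting Lemma~\ref{lemma:claim13}, the derivation of Claim~\ref{claim13} requires the paper's peeling of blue edges one at a time (deleting both endpoints), classifying each remaining vertex as type~1 or type~2, with each type-2 vertex contributing a factor at most $B(r)=\max\{r_0,A(r)\}$ satisfying $B(r)/r\leq r^{-\alpha}$; the contradiction then splits into two scenarios according to whether the number $k_1$ of peeling rounds is below or above $m/2-\sqrt{\xi}m$, the assumed shortage of blue edges forcing either a quadratic saving $r^{-\alpha(m-2k_1)^2}$ or many type-2 vertices. This quantitative mechanism, which is where the hypothesis of the claim is actually used, does not appear in your plan, so as written the proposal identifies the right objects but does not yet constitute a proof.
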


Before addressing the proof of Claim~\ref{claim13}, we show that it implies the desired result. Let $\HH$ be an $m$-vertex multicolored cluster graph such that  $e_{r_1}(\HH) + \dots + e_{r}(\HH) \geq \ex(m, K_3) - \xi m ^ 2$. Let $\HH^{\prime}$ be the subgraph of $\HH$ with edge set $E_{r_1} (\HH)\cup \dots \cup E_{r}(\HH)$, where $E_j(\HH)$ denotes the set of edges of $\HH$ whose lists have size $j$. We claim that $\HH^{\prime}$ is $K_3$-free. Indeed, if  $\HH^{\prime}$ contained a triangle with edges $e_1,e_2,e_3$, then 
$$|L_{e_1}|+|L_{e_2}|+|L_{e_3}| \geq 3r_1> 2r_1+1>r$$ 
implies that two of the lists have non-empty intersection, leading to a $K_3^{(2)}$ in $G$ by Lemma~\ref{lemma:colored_subgraph}. 

By Theorem~\ref{theorem:stability_furedi}, there is a partition $U_1 \cup U_{2} = [m]$ with
\begin{align*}
e_{\HH^{\prime}}(U_1)  + \ e_{\HH^{\prime}}(U_{2}) \leq \xi m^2,
\end{align*}
where $e_{\HH^{\prime}}(U_i)$ is the number of edges of $\HH^{\prime}$ with both endpoints in $U_i$.  The bipartite subgraph $\widehat{\HH}$ obtained from $\HH^{\prime}$ obtained by removing all edges with both endpoints in the same class satisfies
\begin{align*}
e(\widehat{\HH}) \geq  (\ex(m, K_3) - \xi m ^ 2) - \xi m^2=  \ex(m, K_3) - 2 \xi m^2.
\end{align*}

We claim that, even if we add arbitrary edges with lists of size $1$ to $\HH$ (while preserving its property of being $K_3^{(2)}$-free),  $e_1(\HH) + \cdots + e_{r_0}(\HH)  \leq 10\xi m^2$. Otherwise, by our choice of $\xi$in~\eqref{eq:quantification}, Lemma~\ref{lemma:AY} can be applied and the graph obtained by adding the edges in $E_1 \cup \cdots \cup E_{r_0}$ to $\widehat{\HH}$ would contain a $K_3$ such that exactly one of the edges, say $f_1$, is in some set $U_i$. Let $f_2,  f_{3}$ be the other edges of the copy of $K_3$, which lie in $E_{r_1} \cup \cdots \cup E_{r}$. By construction, we have 
$$|L_{f_1}|+|L_{f_2}|+|L_{f_3}| \geq 1+2r_1>r,$$
a contradiction.

As a consequence, the number of edges of $\HH$ with both ends in the same set $U_i$ is at most $11 \xi m^2$. Let $W_i = \cup_{j \in U_i} V_j$ for $i \in \{1, 2\}$. Then, by our choice of $\eta$ and $\xi$ in~\eqref{eq:quantification}, we have
\begin{align*}
e_G(W_1) +  e_G(W_{2}) \leq r \eta n^2 + (n/m)^2 \cdot (e_\HH(U_1) + e_\HH(U_{2})) < \delta n^2,
\end{align*} 
as required. This proves Lemma~\ref{lemma:main_result}.

We now move to the actual proof of Claim~\ref{claim13}.  Given a cluster graph $\HH$, let $E_b(\HH)$ be the set of all edges whose color lists have sizes between $r_1$ and $r$. We refer to them as the \emph{blue} edges of $\HH$. Let $E_g(\HH)$ be the set of all edges whose color lists have sizes between $2$ and $r_0$, the \emph{green} edges of $\HH$.  The main ingredient in the proof of Claim~\ref{claim13} is the following auxiliary lemma.
\begin{lemma}\label{lemma:claim13}
Let $r$ be an integer such that $2\leq r \leq 26$ and let $\HH$ be a $(K_3,\leq 2)$-free multicolored cluster graph for which all edges are green. Then, for all $0 < \alpha \leq \frac{1}{1000}$ it is
$$c(\HH)\leq r^{\frac14-\alpha} <r^{\frac14}.$$
\end{lemma}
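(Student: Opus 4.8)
The plan is to bound $c(\HH)$ for an arbitrary $(K_3,\le 2)$-free multicolored cluster graph $\HH$ in which every edge has list size between $2$ and $r_0$, by reducing to a continuous optimization problem and then exploiting the extra slack coming from the constraint $|L_e|\le r_0=r_0(r)$, which is strictly smaller than $r$. First I would recall the standard ``fractional'' reformulation: for a graph $\HH$ on $m$ vertices with edge lists $L_e\subseteq[r]$ such that the lists on every triangle are pairwise disjoint, the quantity $c(\HH)=\prod_e |L_e|^{1/m^2}$ is maximized, up to $o(1)$ as $m\to\infty$, by a ``blow-up'' structure. Concretely, one passes to a weighted reduced problem: assign to each color $i\in[r]$ (or rather to each subset that can occur as a list) a fraction $x_S\ge 0$ of the vertex pairs, with $\sum x_S\le 1$, subject to the triangle-disjointness constraint, and maximize $\sum_S x_S\log|S|$; then $\log c(\HH)\le$ this optimum $+o(1)$. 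The disjointness condition forces that in any ``triangle'' of the pattern the three chosen lists are pairwise disjoint, so their sizes sum to at most $r$.

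The key structural step is the classical observation (used already in \cite{2coloredlagos} for $r\le 12$) that the extremal weighted configuration may be taken to be a complete multipartite ``template'' on a bounded number of parts, where each pair of parts carries a single list, and lists on a triangle of parts are pairwise disjoint. For $k$ parts of relative sizes $a_1,\dots,a_k$ (with $\sum a_i=1$) and list sizes $\ell_{ij}$ on the pair $(i,j)$, one gets
\[
\log c(\HH)\le \sum_{i<j} 2a_ia_j\log \ell_{ij}+o(1),
\]
with the constraint that $\ell_{ij}+\ell_{ik}+\ell_{jk}\le r$ for every triple $i<j<k$ and, crucially here, $\ell_{ij}\le r_0$ for all pairs. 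I would first dispose of the $k=2$ case directly: there is a single list of size $\le r_0\le r-2\sqrt r$ carried by a fraction $\le 1/2$ of the pairs, giving $\log c(\HH)\le \tfrac12\log r_0\le \tfrac12\log(r-2\sqrt r)$, which is comfortably below $(\tfrac14-\alpha)\log r$ for $\alpha=1/1000$ and $r\le 26$ — one just checks $\sqrt{r-2\sqrt r}<r^{1/4-\alpha}\cdot r^{1/4}$, i.e. $r-2\sqrt r< r^{1-2\alpha}$, which holds in the stated range. For $k\ge 3$, I would use convexity/Lagrange-type arguments: the objective $\sum 2a_ia_j\log\ell_{ij}$ is maximized, for fixed $\ell_{ij}$, by balanced weights $a_i=1/k$, yielding $\frac{1}{k^2}\sum_{i<j}2\log\ell_{ij}=\frac{k-1}{k}\cdot\frac{1}{\binom k2}\sum_{i<j}\log\ell_{ij}$, so by concavity of $\log$ this is at most $\frac{k-1}{k}\log\big(\frac{1}{\binom k2}\sum_{i<j}\ell_{ij}\big)$. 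Averaging the triple constraint $\ell_{ij}+\ell_{ik}+\ell_{jk}\le r$ over all $\binom k3$ triples bounds $\frac{1}{\binom k2}\sum_{i<j}\ell_{ij}\le r/3$ when $k\ge 3$; together with the cap $\ell_{ij}\le r_0$ one refines this to $\le\min\{r/3,r_0\}$. Then $\log c(\HH)\le \frac{k-1}{k}\log(\min\{r/3,r_0\})$, and since $\frac{k-1}{k}<1$, I need $\min\{r/3,r_0\}\le r^{(1/4-\alpha)\cdot k/(k-1)}$, which for $k=3$ reads $\min\{r/3,r_0\}\le r^{3/8-\text{small}}$ — false in general, so the crude averaging is not enough and one must use the $r_0$-cap and the integrality more cleverly.

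The main obstacle, then, is precisely the $k=3$ (and small $k$) regime, where balanced averaging gives a bound near $r^{1/3}$ rather than $r^{1/4}$. Here I would exploit two extra features: (i) the list sizes are integers satisfying $\ell_{12}+\ell_{13}+\ell_{23}\le r$ with each $\ell\le r_0=\lfloor r-2\sqrt r\rfloor$, and (ii) one can optimize the weights $a_i$ exactly rather than assuming balance. For $k=3$ with list sizes $p\ge q\ge s\ge 2$, the optimum of $\frac12(2a_1a_2\log p+2a_1a_3\log q+2a_2a_3\log s)$ over the simplex is a computable expression; one shows it is below $(\tfrac14-\alpha)\log r$ for every admissible integer triple $(p,q,s)$ with $p\le r_0$ and $p+q+s\le r$, by a finite (though tedious) case check over $r\in\{13,\dots,26\}$ — this is exactly the kind of linear-programming-style verification the paper advertises, and the definition of $r_0$ via $r-2\sqrt r$ is tailored so that $r_0$ is small enough for these checks to pass. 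For general $k\ge 4$ I would argue that increasing $k$ only helps the bound (the factor $\frac{k-1}{k}\to 1$ is more than offset by the averaged constraint $\le r/3$ and the cap $r_0$, since $r_0<r/2$ forces the multipartite template to ``spread thin''), or alternatively reduce $k\ge 4$ to the $k=3$ case by a symmetrization/merging argument showing no configuration with $k\ge 4$ parts beats the best $3$-part one. Thus the proof is: (1) reduce $c(\HH)$ to the weighted multipartite optimum via the regularity-type blow-up lemma; (2) handle $k=2$ by the direct estimate; (3) handle $k=3$ by exact simplex optimization plus a finite integer case check using $\ell\le r_0$ and $\sum\ell\le r$; (4) handle $k\ge 4$ by monotonicity or reduction to $k=3$; in every case the bound $r^{1/4-\alpha}$ emerges, with the $-\alpha$ slack coming from the strict inequalities in the case analysis.
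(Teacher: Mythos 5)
There is a genuine gap, and it sits at the very first step. Your entire argument rests on the claim that the extremal $\HH$ "may be taken to be a complete multipartite template on a bounded number of parts, where each pair of parts carries a single list," so that $c(\HH)$ is controlled by a continuous optimization over part weights $a_i$ and list sizes $\ell_{ij}$. This is not a classical observation for this problem, it is not what \cite{2coloredlagos} does, and it is not proved (or even sketched) in your proposal; a Zykov-type symmetrization that preserves both the triangle-disjointness constraint on the lists and the cap $|L_e|\leq r_0$ would itself be a substantial lemma, essentially as hard as the statement you are trying to prove. The paper avoids any such structural reduction: it splits the green cluster graphs into $\mathcal{S}_1$ (those that are $K_4$-free and have no triangle all of whose lists have size $\geq 4$), where Tur\'an-type bounds $|E_4|+\cdots+|E_{r_0}|\leq m^2/4$ and $|E_2|+\cdots+|E_{r_0}|\leq m^2/3$ feed a small linear program (Lemma~\ref{lemma26:s1<27}), and $\mathcal{S}\setminus\mathcal{S}_1$, which is handled by induction on $|V(\HH)|$: one removes a maximum clique $A$, bounds the product of the list sizes on edges from each outside vertex into $A$ using disjointness (Lemmas~\ref{gen_claim}--\ref{lemma(ck)}), and applies the inductive hypothesis to $\HH[A]$ and $\HH[V\setminus A]$ (Lemma~\ref{lemma26:s2<27}). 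Nothing in that argument requires $\HH$ to have any multipartite structure.

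Even granting your reduction, the downstream computations do not go through as written. Your normalization is off by a factor of $2$: with $c(\HH)=\prod_e|L_e|^{1/m^2}$ one gets $\log c(\HH)=\sum_{i<j}a_ia_j\log\ell_{ij}$, not $\sum_{i<j}2a_ia_j\log\ell_{ij}$, and this factor is decisive — with your normalization the $k=2$ bound $\tfrac12\log r_0$ already \emph{exceeds} $(\tfrac14-\alpha)\log r$ (for $r=26$ one has $r_0=15$ and $15^{1/2}\approx 3.87>26^{1/4}\approx 2.26$), and the inequality you propose to check, $r-2\sqrt r<r^{1-2\alpha}$, is the condition for a different bound. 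The claim that balanced weights $a_i=1/k$ maximize $\sum_{i<j}a_ia_j\log\ell_{ij}$ for fixed $\ell_{ij}$ is also false in general (take one $\ell_{ij}$ much larger than the others). Finally, the cases you identify as critical — $k=3$ via "a finite (though tedious) case check" and $k\geq 4$ via an unproved monotonicity or merging argument — are exactly where the content of the lemma lies, and they are left unexecuted. So the proposal is not a proof: its foundational reduction is unjustified, and the parts that would carry the quantitative burden are deferred.
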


Before proving this lemma, we show that it implies the validity of Claim~\ref{claim13}. To this end, assume for a contradiction that any coloring of $G$ avoiding $K_{3}^{(2)}$ leads to a multicolored cluster graph $\HH$, where $|V(\HH)|=m$, for which
\begin{align}\label{eq:contradiction13}
e_{r_1}(\HH) + \dots + e_{r}(\HH) < \ex(m, K_3) - \xi m ^ 2. 
\end{align}

Let $\HH_0 = \HH$. If $\HH_0$ contains a blue edge, let $e_1 = \{ u, v \}$ be one such edge, so that $|L_{e_1}|\geq r_1$. Let $w$ be a vertex $V\setminus\{u,v\}$. If at most one of the pairs $\{u,w\}$ and $\{v,w\}$ is an edge of $\HH_0$, the list of colors on this edge cannot produce a $K_3^{(2)}$ involving $e_1$. If $w$ is a joint neighbor of $u$ and $v$, the sum of the sizes of $L_{\{u, z\}}$  and  $L_{\{v, z\}}$  is at most $r-|L_{e_1}| \leq r-r_1$, otherwise we obtain a copy of $K_3^{(2)}$. Thus the product of the sizes of $L_{\{u, z\}}$  and  $L_{\{v, z\}}$ is at most $A(r)=\left\lfloor (r-r_1)/2 \right\rfloor \cdot \left\lceil (r-r_1)/2\right\rceil < r$ by~\eqref{eq:Ar}. We say that a vertex $w$ has \emph{type 1} if there is a single edge connecting it to $\{u,v\}$, and this edge is blue. Otherwise it is said to have \emph{type 2}. If $w$ is a type 2 vertex, then either the list of the edge connecting it to $u$ or $v$ has size at most $r_0$ (in case $w$ is adjacent to at most one vertex among $u$ and $v$), or the product of the sizes of the lists is at most $A(r)$ (in case $w$ is adjacent to $u$ and $v$). This means that the product of the sizes of the lists of edges connecting a type 2 vertex to $\{u,v\}$ is at most $B(r)=\max\{r_0,A(r)\}<r$.    We know that $B(r) \leq r-1$. Note that
\begin{equation}\label{bound:B}
\frac{B(r)}{r} \leq \frac{r-1}{r} \leq r^{-\alpha}
\end{equation}
holds if $e^{-1/r} \leq  r^{-\alpha}$, which is equivalent to $\alpha \ln r - 1/r \leq 0$. The derivative of $f(r)=\alpha \ln r - 1/r$ is positive, which means that~\eqref{bound:B} is satisfied for all $r\leq 26$ if $\alpha \ln 26 - 1/26 \leq 0$, which holds for $\alpha \leq 0.0118$. 

Let $n_1(e_1)$ and $n_2(e_1)$ denote the number of vertices of type 1 and 2 with respect to $e_1$. Now remove vertices $u$  and $v$ and all incident edges from $\HH_0$, and call the remaining multicolored graph on $(m-2)$ vertices $\HH_1$. If $\HH_1$ contains a blue edge $e_2$, we repeat this argument for $\HH_1$ and for subsequent graphs until we reach a graph $\HH_{k_1}$ on $(m-2k_1)$ vertices that does not contain any blue edge, that is, such that every edge in $\HH_{k_1}$ is green. By construction, we have 
\begin{equation}\label{eq_number}
k_1+\sum_{i=1}^{k_1} (n_1(e_i)+n_2(e_i)) =  \sum_{i=1}^{k_1}  (m-2i+1) = k_1m -k_1^2.
\end{equation}
We conclude that the number of $K_3^{(2)}$-free colorings of $\HH$ is at most
\begin{eqnarray*}
 && r^{k_1+n_1(e_1)+\cdots+n_1(e_{k_1})} \cdot (B(r))^{n_2(e_1)+\cdots+n_2(e_{k_1})} \cdot c(\HH_{k_1})^{(m-2k_1)^2}\label{eq:26colors-1}   \\
 &=& \left(\frac{B(r)}{r}\right)^{n_2(e_1)+\cdots+n_2(e_{k_1})} \cdot  r^{k_1m -k_1^2} \cdot c(\HH_{k_1})^{(m-2k_1)^2}. \nonumber
\end{eqnarray*}

As all edges in the graph $\HH_{k_1}$ are green, using  the upper bound given in Lemma~\ref{lemma:claim13}, we obtain 
\begin{eqnarray} \label{eq:26colors1}
c(\HH)^{m^2} &\leq &   \left(\frac{B(r)}{r}\right)^{n_2(e_1)+\cdots+n_2(e_{k_1})} \cdot  r^{k_1m -k_1^2} \cdot r^{\left(\frac{1}{4}-\alpha\right)(m-2k_1)^2}\nonumber \\
&=& \left(\frac{B(r)}{r}\right)^{n_2(e_1)+\cdots+n_2(e_{k_1})}\cdot r^{\frac{m^2}{4}-\alpha(m-2k_1)^2}.
\end{eqnarray}
Recall that, by~(\ref{eq:contradiction13}), we are assuming that the number of blue edges of $\HH$ is at most $\ex(m, K_3) - \xi m ^ 2$. We shall consider two scenarios. First suppose that $k_1m -k_1^2 \leq \ex(m, K_3) - \xi m ^ 2$, so that $k_1 \leq m/2 - \sqrt{\xi}m$. In this case, all of the vertices in the above construction may have type 1 and~\eqref{eq:26colors1} is at most 
\begin{eqnarray*}
r^{\frac{m^2}{4}-\alpha \xi m^2} \leq r^{\frac{m^2}{4}-\frac{1}{10^4}\xi m^2}.
\end{eqnarray*}

In the second scenario, assume that $k_1 = m/2 - \sqrt{\xi}m+q$, for $q>0$ and $q \leq \sqrt{\xi}m$, so that, by~\eqref{eq_number} and our restriction on the number of blue edges, i.e., $k_1+\sum_{j=1}^{k_1} n_1(e_{j}) \leq \ex(m, K_3) - \xi m ^ 2$, we have
\begin{eqnarray*}
\sum_{j=1}^{k_1} n_2(e_{j})&=& k_1m-k_1^2-\left(k_1+\sum_{j=1}^{k_1} n_1(e_{j})\right)\\
&\geq& q(2\sqrt{\xi}m-q).
\end{eqnarray*}
Equation \eqref{eq:26colors1} is at most
\begin{eqnarray}\label{eq:26colors2} 
&& \left(\frac{B(r)}{r}\right)^{q(2\sqrt{\xi}m-q)} \cdot r^{\frac{m^2}{4}-\alpha(2\sqrt{\xi}m-2q)^2}.
\end{eqnarray}
If $q \leq 3\sqrt{\xi} m/4$, equation~\eqref{eq:26colors2} is at most
$$ r^{\frac{m^2}{4}-\alpha(2\sqrt{\xi}m-2q)^2} \leq r^{\frac{m^2}{4}- \frac{\alpha\xi m^2}{4}} \leq  r^{\frac{m^2}{4}-\frac{1}{10^4}\xi m^2}$$
for  $\alpha \geq 1/10^3$.

If $q \geq 3\sqrt{\xi} m/4$, equation~\eqref{eq:26colors2} is at most
$$\left(\frac{B(r)}{r}\right)^{\frac{15\xi m^2}{16}} \cdot r^{\frac{m^2}{4}} \stackrel{\eqref{bound:B}}{\leq} r^{\frac{m^2}{4}-\frac{1}{10^4}\xi m^2},$$
as for $2 \leq r \leq 26$
$$
\left(\frac{B(r)}{r}\right)^{\frac{15}{16}}  \cdot r^{\frac{1}{10^4}} \leq \left(\frac{r-1}{r}\right)^{\frac{15}{16}}  \cdot r^{\frac{1}{10^4}} \leq  e^{-\frac{15}{16r} + \frac{1}{10^4} \ln r } \leq 1.
$$

Combining the above cases, and using the upper bound~\eqref{eq:coloring_of_g}, we conclude that the number of $K_3^{(2)}$-free colorings of the graph $G$ satisfies
\begin{eqnarray*}
|\C_{r,(K_3,\leq 2)}(G)| &\leq &  M^n \cdot  2^{(H((r+1) \eta)) n^2 + r M^2 / 2} \cdot r^{(r+1) \eta n^2} \cdot\left(  r^{\frac{m^2}{4}-\frac{1}{10^4}\xi m^2}\right)^{\left( \frac{n}{m} \right) ^ 2} \\
 &\stackrel{n \gg 1}{\ll}& r^{\ex(n, K_3)},  \label{eq:result014}
\end{eqnarray*}
as $\xi > (10^4+1) \cdot (r+1) \cdot \eta + 10^4 \cdot H((r+1)\eta)$, which is a contradiction to the hypothesis that $ |\C_{r,(K_3,\leq 2)}(G)|  \geq r^{\ex(n,K_3)}$ and proves Claim~\ref{claim13}. 

To conclude the proof of Claim~\ref{claim13} (and thus of Lemma~\ref{lemma:main_result}), we still need to prove Lemma~\ref{lemma:claim13}. To this end, fix an integer $r$ such that  $13 \leq r \leq 26$, and let $\HH$ be a $K^{(2)}_3$-free multicolored cluster graph for which all edges are green. Recall that we are assuming that no edges of $\HH$ have lists of size less than two. 

\subsection{Proof of Lemma~\ref{lemma:claim13}}

The proof of Lemma~\ref{lemma:claim13} will be by induction, and we shall split the set $\mathcal{S}$ of $K_3^{(2)}$-free cluster graphs for which all edges are green into two classes. One such class, called $\mathcal{S}_1$ contains all $K_4$-free $\HH\in \mathcal{S}$ such that there is no copy of $K_3$ whose three edges have color lists of size at least four.
\begin{lemma}\label{lemma26:s1<27}
Given $\HH \in \mathcal{S}_1$ with $m$ vertices, the following holds for $13 \leq r \leq 26$ and $0 < \alpha \leq \frac{1}{1000}$:
\begin{eqnarray*}
c(\HH)  \leq r^{\frac{1}{4} - \alpha}. 
\end{eqnarray*}
\end{lemma}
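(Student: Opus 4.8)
The plan is to bound $c(\HH)=\prod_{e\in E(\HH)}|L_e|^{1/m^2}$ for $\HH\in\SS_1$ by controlling the total weight $\sum_{e}\log|L_e|$ in terms of the number of edges $e(\HH)$ and the sizes of the individual lists. Since $\HH$ is $K_3^{(2)}$-free, every triangle of $\HH$ is rainbow, meaning the lists on its three edges are pairwise disjoint; in particular the sum of their sizes is at most $r$. Since $\HH\in\SS_1$ it is moreover $K_4$-free and contains no triangle all of whose edges have lists of size $\ge 4$. First I would use the $K_4$-freeness together with Tur\'an's theorem to get $e(\HH)\le \ex(m,K_3)=\lfloor m^2/4\rfloor$, which already gives the `$1/4$' in the exponent once one checks that each list contributes at most $\log_2 r$; the point of the lemma is to extract the extra slack $-\alpha$.

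Next I would separate edges by list size: let $\HH^{\ge 4}$ be the spanning subgraph of edges with $|L_e|\ge 4$ and $\HH^{\le 3}$ the rest. The assumption defining $\SS_1$ says $\HH^{\ge 4}$ is triangle-free, so $e(\HH^{\ge 4})\le \ex(m,K_3)$ as well. I would bound the contribution of $\HH^{\le 3}$ crudely (each such edge contributes at most $\log_2 3<\log_2 r$) and the contribution of $\HH^{\ge 4}$ by $\log_2 r$ per edge, then combine: writing $a=e(\HH^{\ge 4})$ and $b=e(\HH^{\le 3})$ we have $a+b\le m^2/4$ (from $K_4$-freeness of $\HH$) and $a\le m^2/4$ (triangle-freeness of $\HH^{\ge 4}$), and $m^2\log_2 c(\HH)\le a\log_2 r + b\log_2 3$. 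Optimizing this linear program in $(a,b)$ over the two constraints, the worst case is $a=b=m^2/4$... no — the binding configuration should force a genuine gap because whenever $b$ is large, the triangle-free constraint on $\HH^{\ge 4}$ prevents $a$ from also being $m^2/4$ while $\HH$ stays $K_4$-free; more carefully, a $K_4$-free graph in which the `large-list' edges form a triangle-free subgraph cannot have both parts near $m^2/4$ simultaneously, and I would need a small combinatorial observation (e.g.\ via the Kruskal--Katona or Zykov symmetrization flavor, or just a direct density argument) showing $a + b/c_0 \le m^2/4$ for a constant $c_0>1$, or alternatively that $2a+b\le$ something below $m^2/2$. This quantified deficiency, combined with $\log_2 3 < \log_2 r$ for $r\ge 13$, yields $\log_2 c(\HH)\le \frac14\log_2 r - \alpha \log_2 r$ for a concrete $\alpha$, and one finishes by checking $\alpha\le 1/1000$ suffices against the explicit constants for all $13\le r\le 26$.

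The main obstacle I anticipate is precisely this quantitative structural step: deriving a nontrivial joint bound on the number of edges with small lists versus large lists. The naive bounds $a\le m^2/4$ and $a+b\le m^2/4$ only give $\log_2 c(\HH) \le \frac14\log_2 r$ with no gap (take $a=m^2/4$, $b=0$). To get the strict improvement one must use the rainbow-triangle condition more globally — for instance, that the lists are pairwise disjoint on every triangle means a vertex of high degree cannot have many incident edges all carrying large lists, since their neighborhoods' cross-edges would be forced to have tiny lists. I would likely set up a weighting/entropy argument or an averaging over vertices: for each vertex $v$, the edges at $v$ with large lists span a triangle-free (indeed independent-set-inducing, by $K_4$-freeness of $\HH$) neighborhood structure, and I would charge the deficit to these vertices. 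An alternative, which may be what the authors do given the inductive framing, is to induct on $m$: delete a suitably chosen vertex (say one incident to a small-list edge or of high degree) and apply the inductive bound to $\HH$ minus that vertex, checking that the removed weight is below the threshold $r^{1/4-\alpha}$ per ``slot''; the base case is a bounded graph handled by direct computation or by the $r\le 12$ argument already in the appendix. Either way the delicate point is making the constant $\alpha=1/1000$ survive uniformly across $13\le r\le 26$, which will come down to verifying one or two explicit inequalities of the form $e^{-1/r}\le r^{-\alpha}$ as already done for~\eqref{bound:B}.
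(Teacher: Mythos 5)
There is a genuine gap: you never use the hypothesis that every edge of $\HH\in\mathcal{S}_1$ is \emph{green}, i.e., that $|L_e|\leq r_0$, where $r_0=\lfloor r-2\sqrt{r}\rfloor$ for $14\leq r\leq 26$ and $r_0=6$ for $r=13$. This cap on the list sizes is the sole source of the $-\alpha$ slack in the paper's proof. The paper sets up essentially the linear program you describe --- triangle-freeness of the edges with lists of size at least $4$ gives $\sum_{j\geq 4}|E_j|\leq \ex(m,K_3)\leq m^2/4$, and $K_4$-freeness gives $\sum_{j\geq 2}|E_j|\leq \ex(m,K_4)\leq m^2/3$ --- but its objective charges each large-list edge $\ln r_0$ rather than $\ln r$. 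The LP optimum is then $r_0^{1/4}\cdot 3^{1/12}$, and since $r_0\leq r-2\sqrt{r}$ a short numerical check shows this is at most $r^{1/4-\alpha}$ for all $0<\alpha\leq 1/1000$ and $13\leq r\leq 26$. Because you instead charge $\log_2 r$ per large-list edge, your LP has optimum exactly $r^{1/4}$ under your constraints, and you are forced to hunt for an extra combinatorial tradeoff between $a$ and $b$; that step, which you yourself flag as the main obstacle, is never carried out --- and it is not needed once the green hypothesis is invoked.

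Two further points. First, you misquote Tur\'an: $K_4$-freeness of $\HH$ gives $e(\HH)\leq \ex(m,K_4)$, which is roughly $m^2/3$, not $\ex(m,K_3)=\lfloor m^2/4\rfloor$. With the correct constraint $a+b\leq m^2/3$ your LP optimum moves to $a=m^2/4$, $b=m^2/12$, i.e., $r^{1/4}\cdot 3^{1/12}>r^{1/4}$, so as written your argument does not even recover the bound \emph{without} the gap. Second, the vertex-deletion/induction machinery you sketch at the end belongs to the complementary case $\HH\in\mathcal{S}\setminus\mathcal{S}_1$ (Lemma~\ref{lemma26:s2<27}) and to the removal of blue edges in the proof of Claim~\ref{claim13}; for $\mathcal{S}_1$ the paper's argument is a direct, non-inductive LP computation, supplemented only by the trivial cases $m\leq 2$ (which your sketch also omits, though they are easy).
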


\begin{proof}
Fix $\HH \in \mathcal{S}_1$ with $m$ vertices. If $m=1$, there is nothing to prove. If $m=2$ and $r=13$, we have $c(\HH) \leq r_0 =6 \leq 13^{1-\alpha}$ for $\alpha \leq  0.3014 \leq (\log{13}-\log{6})/\log{13}$. For $r\geq 14$, we have 
$$c(\HH) \leq r_0 \leq r-2\sqrt{r} = r\left(1-\frac{2}{\sqrt{r}} \right) \leq r e^{-2/\sqrt{r}} \leq r^{1-\alpha}$$ for 
$\alpha \leq 0.1203 <\frac{2}{\sqrt{26} \ln 26} \leq \frac{2}{\sqrt{r} \ln r}$.
 
 Next assume $m\geq 3$. By the definition of $\mathcal{S}_1$,  the set $E_4\cup \cdots \cup E_{r_0}$ cannot induce a copy of $K_3$. Thus, by Tur\'{a}n's Theorem, we have
\begin{equation*}
|E_4| +\cdots + |E_{r_0}| \leq \ex(m, K_3) \leq \frac{1}{4}  m^2.
\end{equation*}

Again by the definition of $\mathcal{S}_1$,  the set $E_2\cup \cdots \cup E_{r_0}$ cannot contain a copy of $K_4$, leading to
\begin{equation*}
|E_2| +\cdots + |E_{r_0}| \leq \ex(m, K_4) \leq \frac{1}{3}  m^2.
\end{equation*}

If we set $x_i=|E_i|/m^2$ for $i \in \{2,\ldots,r_0\}$, by~\eqref{def_cH}, we have
$$c(\HH) = \prod_{j=2}^{r_0} j^{x_j},$$ so that the logarithm $\ln c(\HH)$ is bounded above by the solution to the linear program
\begin{eqnarray*}
&\max& \sum_{j=2}^{r_0} x_j \ln j\\
&\textrm{s.t.}&   \sum_{j=4}^{r_0} x_j\leq \frac{1}{4},~~
\sum_{j=2}^{r_0} x_j \leq \frac{1}{3}\\
&&x_j \geq 0,~j\in \{2,\ldots,r_0\}.
\end{eqnarray*}
Solving this linear program gives the optimal solution $x_{r_0}=1/4$, $x_3=1/12$ and $x_j=0$ for the remaining values of $j$. For $14\leq r\leq 26$, we have
\begin{eqnarray}
&& r_0^{\frac14} \cdot 3^{\frac{1}{12}} \leq \left( r - 2 \sqrt{r}\right)^{\frac{1}{4}}\cdot 3^{\frac{1}{12}} \leq r^{\frac{1}{4} - \alpha}\nonumber \\
&\Longrightarrow&  r^{\alpha} \cdot \left( 1 - \frac{2}{\sqrt{r}} \right)^{\frac{1}{4}} \cdot 3^{\frac{1}{12}} \leq  26^{\alpha} \cdot \left( 1 - \frac{2}{\sqrt{26}} \right)^{\frac{1}{4}} \cdot 3^{\frac{1}{12}} \leq  1. \label{eq_final}
\end{eqnarray}
For $r=13$, we get
\begin{equation}\label{eq_final2}
13^{\alpha} \cdot \left( \frac{6}{13} \right)^{\frac{1}{4}} \cdot 3^{\frac{1}{12}} \leq 1.
\end{equation}
 Equations~\eqref{eq_final} and~\eqref{eq_final2} hold for $0 < \alpha \leq 0.0101$. This leads to $c(\HH) \leq  r^{1/4 -\alpha}$.
\end{proof}

To complete the proof of Claim~\ref{claim13}, we consider the cluster graphs that are not considered in Lemma~\ref{lemma26:s1<27}. 
In our arguments, we use the following optimization problem for given positive integers $p \geq 2$ and $L$:
\begin{eqnarray}\label{maxlemma}
&\max& \prod_{j=1}^c x_j \\
&\textrm{s.t.}& c, x_1,\dots , x_c\in {\mathbb N} = \{1,2,\ldots \} \nonumber \\
&& x_1+\cdots+x_c \leq p\nonumber\\
&& c \leq L.\nonumber
\end{eqnarray}

\begin{definition}\label{def:cs}
Given positive integers $k \geq 2$ and $r \geq 2$, let
\begin{itemize}
\item[(i)] $c_k(r)$ be the maximum of the optimization problem~\eqref{maxlemma} with $p=r\cdot \lfloor k/2 \rfloor$ and $L=\binom{k}{2}$; 

\item[(ii)] $c_k^*(r)$ be the maximum of the optimization problem~\eqref{maxlemma} with $p=r$ and $L=k$. 
\end{itemize}
\end{definition}

We shall use the following three straightforward lemmas.
 \begin{lemma}\label{gen_claim}
Let $k \geq 3$, let $\HH$ be a $K_3^{(2)}$-free multicolored cluster graph such that $|L_e| \geq 2$ for all $e \in E(\HH)$, and assume that $A \subset  V(\HH)$ is such that $\HH[A]$ is isomorphic to $K_k$. For a vertex $v \in V(\HH)$ let $E'(v)=\{\{v,x\} \in E(\HH) \colon x \in A\}$. For any $v \in V(\HH) \setminus A$, it holds that
 \begin{equation}\label{gen_UB}
 \prod_{e \in E'(v)} |L_e| \leq c^*_k(r) \leq \overline{c}_k(r)=\max\left\{\left(\frac{r}{j}\right)^{j} \colon j \in \{1,\ldots,k\}\right\}.
 \end{equation}
\end{lemma}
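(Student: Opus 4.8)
The plan is to prove Lemma~\ref{gen_claim} in two independent pieces: first the combinatorial bound $\prod_{e\in E'(v)}|L_e|\leq c_k^*(r)$, which comes directly from the $K_3^{(2)}$-free hypothesis together with the structure of $\HH[A]$; and then the purely arithmetic bound $c_k^*(r)\leq \overline{c}_k(r)$, which is an elementary optimization fact.

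For the first inequality, fix $v\in V(\HH)\setminus A$ and let $A'=\{x\in A\colon \{v,x\}\in E(\HH)\}$ be the neighbors of $v$ inside $A$, say $|A'|=c\leq k$. The point is that the edges from $v$ to $A'$, together with the edges of the complete graph $\HH[A]$, create many triangles through $v$: for any two vertices $x,y\in A'$, the pair $xy$ is an edge of $\HH[A]$, so $\{v,x\},\{v,y\},\{x,y\}$ is a copy of $K_3$ in $\HH$. Since $\HH$ is $K_3^{(2)}$-free and $|L_{xy}|\geq 2$, the lists $L_{\{v,x\}}$ and $L_{\{v,y\}}$ must be disjoint (if they shared a color, that color on both edges plus any of the $\geq 2$ colors of $xy$ — or the repeated color if the third list contained it, but in any case we can pick a second color — would yield the pattern $K_3^{(2)}$; more carefully, two equal colors on two of the edges and any color on the third edge already forces a $K_3^{(2)}$ unless all three lists are forced to a single common color, which is impossible since $|L_{xy}|\ge 2$). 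Hence the $c$ lists $\{L_{\{v,x\}}\colon x\in A'\}$ are pairwise disjoint subsets of $[r]$, so writing $x_i=|L_{\{v,x_i\}}|\geq 1$ we get $\sum_{i=1}^c x_i\leq r$ and $c\leq k$, and $\prod_{e\in E'(v)}|L_e|=\prod_{i=1}^c x_i$ is therefore at most the maximum of~\eqref{maxlemma} with $p=r$ and $L=k$, which is exactly $c_k^*(r)$ by Definition~\ref{def:cs}(ii).

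For the second inequality, one observes that in~\eqref{maxlemma} with $p=r$, $L=k$, an optimal choice uses all of the budget, i.e.\ $x_1+\cdots+x_c=r$ (increasing any $x_i$ only increases the product), and for a fixed number $c\leq k$ of factors summing to $r$ the product $\prod x_i$ is maximized when the $x_i$ are as equal as possible; dropping integrality and using the AM--GM inequality gives $\prod_{i=1}^c x_i\leq (r/c)^c$. Taking the maximum over $c\in\{1,\ldots,k\}$ yields $c_k^*(r)\leq \max\{(r/j)^j\colon j\in\{1,\ldots,k\}\}=\overline{c}_k(r)$, as claimed. Combining the two pieces completes the proof.

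The only subtlety — and the one point I would write out carefully rather than wave at — is the claim that two edges $\{v,x\},\{v,y\}$ with a common color force a $K_3^{(2)}$ in the presence of a third edge $xy$ with $|L_{xy}|\geq 2$. If the common color $c_0$ is not in $L_{xy}$, then coloring all three edges using $c_0$ on $\{v,x\},\{v,y\}$ and any color of $L_{xy}$ gives exactly the pattern $K_3^{(2)}$. If $c_0\in L_{xy}$, pick instead a color $c_1\in L_{xy}\setminus\{c_0\}$ (possible since $|L_{xy}|\ge 2$); then $c_0,c_0,c_1$ is again the pattern $K_3^{(2)}$. So in all cases disjointness of $L_{\{v,x\}}$ and $L_{\{v,y\}}$ is forced, which is what the counting argument needs. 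Everything else is bookkeeping, and since the statement of the lemma only asserts an inequality (not extremality of any particular configuration) there is no real obstacle beyond making this disjointness step precise.
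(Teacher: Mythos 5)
Your proposal is correct and follows essentially the same route as the paper: pairwise disjointness of the lists $L_{\{v,x\}}$, $x \in A'$ (forced by the clique $\HH[A]$, the $K_3^{(2)}$-freeness, and $|L_{xy}|\geq 2$), gives $\sum x_i \leq r$ with at most $k$ factors, and then the AM--GM bound $\prod_{i=1}^j a_i \leq (r/j)^j$ yields $c_k^*(r)\leq \overline{c}_k(r)$. The only difference is that you spell out the disjointness step (including the case where the shared color lies in $L_{xy}$) in more detail than the paper does, which is a point in your favor.
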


\begin{proof}
For each edge $e \in E'(v)$, set $x_e=|L_e|$. Because $A$ induces a clique and $\HH$ is $K_3^{(2)}$-free, the lists associated with edges between $v$ and $A$ are mutually disjoint, so that $\sum_{e \in E'(v)} x_e \leq r$. Let $j\leq k$ be the number of edges between $v$ and $A$. It is clear that 
$$\prod_{e \in E'(v)} |L_e| \leq \max\left\{\prod_{i=1}^j a_i \colon 1 \leq j \leq k, a_1,\ldots,a_j>0, a_1+\cdots+a_j \leq r\right\}=c_k^*(r).$$
The result follows because for $a_1+\cdots+a_j \leq r$ it is
$$\prod_{i=1}^j a_i\leq \left(\frac{r}{j}\right)^j.$$
\end{proof}

\begin{lemma}\label{lemma:UC}
Let $r$ and $k \geq 3$ be positive integers. For $j \geq 1$, consider a partition $E(K_{k}) = E_1 \cup \cdots \cup E_j$ of the edge set of the complete graph $K_k$ and integers $1 \leq s_1, \ldots , s_{j} \leq r$ such that
 $$r \left\lfloor \frac{k}{2} \right\rfloor < \sum_{i=1}^j |E_i| s_i.$$ 
Then, for any assignment of color lists in $[r]$ to the edges of $K_k$ such that, for each $i$, all edges $e \in E_i$ have list size at least $s_i$, there exists a copy of $K_3$ for which two of the lists have non-empty intersection.  
 \end{lemma}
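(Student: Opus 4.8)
The plan is to argue by contradiction: assume that there is an assignment of color lists from $[r]$ to the edges of $K_k$, with each edge $e\in E_i$ receiving a list of size at least $s_i$, and suppose that no two lists on edges sharing a vertex (i.e.\ lying on a common triangle) intersect. Our goal is to show that this forces $\sum_{i=1}^j |E_i|s_i \leq r\lfloor k/2\rfloor$, contradicting the hypothesis. First I would record the key structural consequence of the assumption: if two edges $e,f$ of $K_k$ share a vertex, then any third edge completing the triangle on their two other endpoints exists (since $K_k$ is complete), so $L_e,L_f$ must be disjoint. Hence \emph{any two adjacent edges of $K_k$ receive disjoint lists}; equivalently, the edges receiving color $c$ form a matching in $K_k$, for every $c\in[r]$.

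With this reformulation in hand, the counting is clean. For each color $c\in[r]$, let $M_c\subseteq E(K_k)$ be the set of edges whose list contains $c$; by the previous paragraph $M_c$ is a matching, so $|M_c|\leq \lfloor k/2\rfloor$. Summing the sizes of all lists in two ways gives
\begin{equation*}
\sum_{e\in E(K_k)} |L_e| \;=\; \sum_{c=1}^r |M_c| \;\leq\; r\left\lfloor \frac{k}{2}\right\rfloor .
\end{equation*}
On the other hand, since every $e\in E_i$ has $|L_e|\geq s_i$ and $E(K_k)=E_1\cup\cdots\cup E_j$ is a partition,
\begin{equation*}
\sum_{e\in E(K_k)} |L_e| \;=\; \sum_{i=1}^j \sum_{e\in E_i} |L_e| \;\geq\; \sum_{i=1}^j |E_i| s_i .
\end{equation*}
Combining the two displays yields $\sum_{i=1}^j |E_i| s_i \leq r\lfloor k/2\rfloor$, directly contradicting the hypothesis $r\lfloor k/2\rfloor < \sum_{i=1}^j |E_i| s_i$. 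Therefore some two lists on a common triangle must intersect, which is the desired conclusion.

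I do not expect any real obstacle here; the lemma is essentially a double-counting identity once one observes that "no intersecting pair on a triangle" is equivalent to "each color class is a matching." The only point that needs a moment of care is the reduction in the first paragraph — spelling out that in the complete graph $K_k$ (with $k\geq 3$) any two adjacent edges genuinely lie on a common triangle, so the $K_3^{(2)}$-avoidance hypothesis does apply to them — but this is immediate since the two non-shared endpoints are also joined by an edge.
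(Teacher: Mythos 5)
Your proof is correct and is essentially identical to the paper's: both argue by contradiction that each color class must form a matching in $K_k$ (of size at most $\lfloor k/2\rfloor$) and then double-count $\sum_{e}|L_e|$ to contradict the hypothesis. No issues.
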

 
\begin{proof}
Assume that  is an assignment of lists as in the statement such that, for all copies of $K_3$ in $K_k$, the lists associated with any two of its edges are disjoint. This means that, for every color $\alpha$, the edges whose lists contain $\alpha$ form a matching in $K_k$. Since a maximum matching in $K_k$ has size $\lfloor k/2 \rfloor$, we must have
$$ r   \left\lfloor \frac{k}{2} \right\rfloor \geq \sum_{e \in E(K_k)} |L_e| \geq  \sum_{i=1}^j |E_i| s_i,$$
contradicting our assumption about $r$ and $k$.
 \end{proof}

\begin{lemma}\label{lemma(ck)}
Let $r\geq 2$ and $k\geq 3$ be integers. Let $\HH$ be a $K_3^{(2)}$-free multicolored cluster graph whose underlying graph is $K_k$ and whose edge lists are contained in $[r]$ and have size at least two. Then 
$$\prod_{e \in E(\HH)} |L_e| \leq \tilde{c}_k(r)=\left(\frac{r}{\binom{k}{2}}\left\lfloor \frac k2 \right\rfloor \right)^{\binom{k}{2}}.$$
\end{lemma}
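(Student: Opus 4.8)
The plan is to bound the product $\prod_{e \in E(\HH)} |L_e|$ by relating it to the optimization problem~\eqref{maxlemma} with $p = r\lfloor k/2\rfloor$ and $L = \binom{k}{2}$, and then to show that the continuous relaxation of that problem is maximized when all variables are equal. First I would observe that, since $\HH$ is $K_3^{(2)}$-free with all lists of size at least two and underlying graph $K_k$, Lemma~\ref{lemma:UC} (applied with $j=1$, $E_1 = E(K_k)$, and any uniform lower bound $s_1$ on list sizes) forces $\sum_{e \in E(\HH)} |L_e| \leq r\lfloor k/2 \rfloor$; indeed, for each color $\alpha \in [r]$ the set of edges whose list contains $\alpha$ is a matching in $K_k$, hence has size at most $\lfloor k/2\rfloor$, and summing over the $r$ colors gives the bound. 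So the integers $|L_e|$, $e \in E(\HH)$, are $\binom{k}{2}$ positive integers summing to at most $r\lfloor k/2\rfloor$, and $\prod_{e} |L_e| \leq c_k(r)$, the maximum in Definition~\ref{def:cs}(i).

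Next I would drop the integrality constraint: for fixed number of factors $c = \binom{k}{2}$ and fixed sum bound $S \leq r\lfloor k/2\rfloor$, the product of positive reals $x_1,\dots,x_c$ with $\sum x_j \leq S$ is maximized, by AM--GM, when $x_1 = \cdots = x_c = S/c$, giving $(S/c)^c$; and since $t \mapsto (t/c)^c$ is increasing in $t$ for $t>0$, this is largest when $S = r\lfloor k/2\rfloor$. Hence
\[
\prod_{e \in E(\HH)} |L_e| \leq \left(\frac{r\lfloor k/2\rfloor}{\binom{k}{2}}\right)^{\binom{k}{2}} = \tilde{c}_k(r),
\]
which is exactly the claimed bound. (Reducing the number of factors below $\binom{k}{2}$ only decreases the optimum, since replacing two factors $a,b$ by the single factor $a+b$ when $a,b\geq 1$ decreases the product, so we may always take $c = \binom{k}{2}$ in the relaxation.)

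I do not expect any serious obstacle here; the lemma is essentially a packaging of Lemma~\ref{lemma:UC} together with a one-line AM--GM argument. The only point requiring a little care is making sure the edge-count constraint is applied correctly — that the relevant matching bound is $\lfloor k/2\rfloor$ per color and that there are exactly $\binom{k}{2}$ edges — and that the monotonicity observations (in the sum $S$ and in the number of factors) are stated cleanly so that the continuous optimum genuinely dominates the integer optimum $c_k(r)$. No estimate is lost in passing from the integer problem to the relaxation, so the stated bound $\tilde c_k(r)$ is the honest conclusion of this chain.
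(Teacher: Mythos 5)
Your proof is correct and follows essentially the same route as the paper: the matching-per-color argument of Lemma~\ref{lemma:UC} gives $\sum_{e}|L_e|\leq r\lfloor k/2\rfloor$, and then AM--GM over the $\binom{k}{2}$ edges yields $\tilde{c}_k(r)$. The only blemish is the final parenthetical: merging two factors $a,b$ into $a+b$ does \emph{not} decrease the product when one of them equals $1$ (the correct fix is to pad with factors equal to $1$), but this remark is unnecessary here since the underlying graph is $K_k$ and every edge carries a list, so there are exactly $\binom{k}{2}$ factors.
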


\begin{proof}
Given an edge $e \in E(\HH)$, let $x_e=|L_e|$. Let $E_i$ denote the set of edges of $\HH$ whose lists have size $i$. By Lemma~\ref{lemma:UC}, $\sum_{e \in E(\HH)} x_e = \sum_{i=2}^r i \cdot |E_i| \leq r \lfloor k/2 \rfloor$, since $\HH$ is $K_3^{(2)}$-free.

In particular, the vector $(x_e)_{e \in E(\HH)}$ is a feasible solution to the optimization problem~\eqref{maxlemma} with $p=r \lfloor k/2 \rfloor$ and $L=\binom{k}{2}$. For the inequality, observe that for any choice of $j$ positive real numbers such that $a_1+ \cdots + a_j \leq r\lfloor k/2\rfloor$, we have 
$$\prod_{i=1}^{\binom{k}{2}} a_i \leq \left(\frac{r}{j}\left\lfloor \frac k2 \right\rfloor \right)^{\binom{k}{2}}.$$
This concludes the proof.
\end{proof}

We are now ready to prove the desired result.
\begin{lemma}\label{lemma26:s2<27}
Fix an integer $r$ such that $13 \leq r \leq 26$. Given $\HH \in \mathcal{S} \setminus \mathcal{S}_1$ and $0 < \alpha\leq  \frac{1}{1000}$, we have
\begin{eqnarray*}\label{eq:lemma26}
c(\HH) \leq r^{\frac14 - \alpha}.
\end{eqnarray*}
\end{lemma}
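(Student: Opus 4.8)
The plan is to prove Lemma~\ref{lemma26:s2<27} by induction on the number of vertices $m$ of $\HH$, exploiting the fact that since $\HH \in \mathcal{S}\setminus\mathcal{S}_1$, either $\HH$ contains a copy of $K_4$ or there is a copy of $K_3$ all of whose edges have lists of size at least $4$. In both cases, one can find a small ``dense'' clique $A$ (of size $k=3$ or $k=4$) whose internal product of list sizes is large, but — crucially, because $\HH$ is $K_3^{(2)}$-free — every outside vertex $v \in V(\HH)\setminus A$ contributes at most $\overline{c}_k(r)$ to the product $c(\HH)$ through its edges to $A$, by Lemma~\ref{gen_claim}. The idea is to peel off the clique $A$: write $V(\HH) = A \cup (V(\HH)\setminus A)$, bound the contribution of edges inside $A$ using Lemma~\ref{lemma(ck)} (i.e.\ by $\tilde c_k(r)$), the contribution of edges from $A$ to the rest by $\left(\overline{c}_k(r)\right)^{m-k}$ via Lemma~\ref{gen_claim}, and the contribution of edges entirely within $\HH' = \HH[V\setminus A]$ by the inductive hypothesis applied to $\HH'$ (which lies in $\mathcal{S}$, being an induced subgraph of a $K_3^{(2)}$-free green cluster graph, so its bound is $\max\{r^{1/4-\alpha}, \text{base cases}\}$ — note one must also invoke Lemma~\ref{lemma26:s1<27} in case $\HH' \in \mathcal{S}_1$, or absorb everything into a single claimed bound $c(\HH') \le r^{1/4-\alpha}$ valid for all green $K_3^{(2)}$-free cluster graphs, which is exactly Lemma~\ref{lemma:claim13} being proved inductively).

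Concretely, after splitting, one gets an inequality of the shape
\begin{equation*}
c(\HH)^{m^2} \le \tilde c_k(r)\cdot \left(\overline{c}_k(r)\right)^{k(m-k)} \cdot \left(r^{\frac14-\alpha}\right)^{(m-k)^2},
\end{equation*}
and the goal is to show the right-hand side is at most $\left(r^{\frac14-\alpha}\right)^{m^2}$, i.e.\ that
\begin{equation*}
\tilde c_k(r)\cdot \left(\overline{c}_k(r)\right)^{k(m-k)} \le \left(r^{\frac14-\alpha}\right)^{m^2-(m-k)^2} = \left(r^{\frac14-\alpha}\right)^{k(2m-k)}.
\end{equation*}
Taking logarithms, this becomes a linear inequality in $m$: the coefficient of $m$ on the left is $k\ln \overline{c}_k(r)$ and on the right is $2k(\tfrac14-\alpha)\ln r$, so it suffices to check (a) that $\ln \overline{c}_k(r) \le 2(\tfrac14-\alpha)\ln r = (\tfrac12 - 2\alpha)\ln r$, i.e.\ $\overline{c}_k(r) \le r^{1/2-2\alpha}$, for $k \in \{3,4\}$ and $13 \le r \le 26$, which controls the per-vertex growth; and (b) a constant-term inequality comparing $\tilde c_k(r)$ (adjusted by the $-k^2$ and $+k^2$ terms) against a small power of $r$, which pins down the base case / small-$m$ range. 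Part (a) is the heart of the matter: since $\overline c_k(r) = \max_j (r/j)^j$, for $k=4$ this is $\max\{r, (r/2)^2, (r/3)^3, (r/4)^4\}$, and for $r$ in our range the binding terms need $(r/3)^3 \le r^{1/2-2\alpha}$ type bounds — actually $(r/2)^2 = r^2/4$, so one needs $r^2/4 \le r^{1/2-2\alpha}$, which is false for large $r$! So the naive split is too lossy and one must instead use the \emph{refined} per-vertex bound: outside vertices connected to $A$ by $j$ edges with disjoint lists summing to $\le r$ contribute $(r/j)^j$, but such a vertex also uses up ``color budget'' — in a $K_3^{(2)}$-free graph, if $A$ is a triangle with large lists, an outside vertex adjacent to all of $A$ has its three lists forced to be small by the sum constraint with $A$'s edges, giving a genuine saving of a factor $B(r)/r < 1$ per such vertex, analogous to the type-1/type-2 dichotomy in the proof of Claim~\ref{claim13}.

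The main obstacle, therefore, is getting the per-vertex bookkeeping tight enough: a crude ``product over outside vertices of $\overline c_k(r)$'' does not close, so the plan is to classify each outside vertex $v$ by how many edges it sends to $A$ and how large those lists can be given the $K_3^{(2)}$-freeness constraint interacting with $A$'s own (large) edge lists, then sum these savings against the ``excess'' $\tilde c_k(r)$ coming from the dense clique. This is exactly where the hypotheses ``$\HH \in \mathcal{S}\setminus\mathcal{S}_1$'' (guaranteeing either a $K_4$ or a triangle with all lists $\ge 4$) and the numerical constraints $13\le r\le 26$, $\alpha \le 1/1000$ get used: the triangle-with-all-lists-$\ge 4$ case forces each fully-adjacent outside vertex into a regime where $|L_{\{v,a\}}| + |L_{\{v,a'\}}| \le r - 4$ for two of $A$'s vertices (since $A$'s edge between them has list $\ge 4$ and disjointness with $v$'s two lists around a triangle), and the $K_4$ case gives even more room; in each case one extracts a multiplicative saving that, combined with $\overline c_k(r) \le r^{1/2}$-type bounds that \emph{do} hold when $k=3$ and the lists are bounded by $r-$something, makes the induction go through. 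I would organize the write-up as: (i) reduce to the two structural cases; (ii) in each, fix the witnessing clique $A$, prove the sharpened per-vertex bound via Lemma~\ref{gen_claim} and the disjointness forced by $A$'s large internal lists; (iii) set up the product decomposition and apply induction to $\HH[V\setminus A]$; (iv) verify the resulting numerical inequalities for $13\le r\le 26$, $\alpha\le 1/1000$, handling small $m$ as base cases; (v) conclude $c(\HH)\le r^{1/4-\alpha}$, which together with Lemma~\ref{lemma26:s1<27} completes Lemma~\ref{lemma:claim13}.
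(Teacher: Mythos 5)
Your overall strategy --- take a minimal counterexample, peel off a witnessing clique $A$, bound the inside of $A$ by Lemma~\ref{lemma(ck)}, the edges from each outside vertex to $A$ by Lemma~\ref{gen_claim}, and the rest by induction, then verify numerical inequalities for $13\le r\le 26$ --- is exactly the paper's, and your treatment of the $K_4$-free case (a triangle $A$ with all internal lists of size $\ge 4$ forces $|L_{\{v,v_1\}}|+|L_{\{v,v_2\}}|\le r-4$, hence $c_v\le (r-4)^2/4$) matches the paper's argument verbatim. However, two issues remain. First, a bookkeeping slip: each outside vertex contributes one factor $c_v\le \overline{c}_k(r)$, so the exponent is $m-k$, not $k(m-k)$; the per-vertex budget is therefore $r^{k/2-2k\alpha}$, not $r^{1/2-2\alpha}$. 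Your observation that ``$r^2/4\le r^{1/2-2\alpha}$ is false'' is an artifact of this miscount, not evidence that the naive split fails for the right reason.

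Second, and more seriously, there is a genuine gap in your $K_4$ case. With the correct budget $r^{2-8\alpha}$ per outside vertex, a vertex $v$ adjacent to all four vertices of the chosen $K_4$ has four pairwise disjoint lists summing to at most $r$, so $c_v$ can be as large as $(r/4)^4$, which at $r=26$ is about $1785>26^2=676$. The ``savings'' you propose to extract from $A$'s internal lists do not help here: those lists are only guaranteed to have size $\ge 2$, and the pairwise-disjointness constraints they impose are already subsumed by $\sum_i|L_{\{v,a_i\}}|\le r$, so no further reduction below $(r/4)^4$ is available. The paper closes this gap differently: it sets $k=\min\{\omega(\HH),6\}$ and chooses $A$ to be a \emph{maximum} clique when $\omega(\HH)<6$, so that every outside vertex has at most $k-1$ neighbours in $A$ and $c_v\le\overline{c}_{k-1}(r)$ (e.g.\ $(r/3)^3\approx 650<676$ when $k=4$, which is precisely where $r\le 26$ is tight); when $\omega(\HH)\ge 6$ it peels a $K_6$, whose budget $r^{3-12\alpha}$ comfortably exceeds $\overline{c}_6(r)=(r/6)^6$ for $r\le 26$. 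Your plan, restricted to $k\in\{3,4\}$, cannot handle a graph containing a $K_5$, and you would also need to spell out the base cases where $\HH$ itself is a clique on $3$ to $6$ vertices (handled in the paper via the matching bound $\tilde{c}_k(r)$ of Lemma~\ref{lemma(ck)}, which is strictly stronger than pairwise disjointness).
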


\begin{proof}
Let $r \in \{13,\ldots,26\}$. For a contradiction, assume that the result is false and choose a counterexample $\HH \in \mathcal{S} \setminus \mathcal{S}_1$ with the minimum number of vertices. Recall that the edges of $\HH$ have lists with sizes between $2$ and $r_0$. Let $m$ be the number of vertices of $\HH$. We first show that $\HH$ is not isomorphic to a clique $K_m$ such that $3 \leq m \leq 6$.

For $\HH$ isomorphic to $K_3$, Lemma~\ref{lemma(ck)} tells us that $c(\HH)^{9} \leq \tilde{c}_3(r) = (r/3)^3$. Lemma~\ref{lemma26:s2<27} holds in this case because $c(\HH) \leq {((r/3)^3)}^{\frac{1}{9}} \leq r^{\frac{1}{4} - \alpha}$ for $r^{\frac{1}{12}+\alpha} \leq 26^{\frac{1}{12}+\alpha} \leq 3^{\frac13}$, which in turn holds for
$$\alpha \leq 0.02906<  \frac{4\ln{3}-\ln{26}}{12 \ln 26} .$$

For $\HH$ isomorphic to $K_4$, Lemma~\ref{lemma(ck)} gives $c(\HH)^{16} \leq \tilde{c}_4(r) =(r/3)^6$. Therefore, we need $c(\HH) \leq {((r/3)^6)}^{\frac{1}{16}} \leq  r^{\frac{1}{4} - \alpha}$, which holds for 
$$
\alpha \leq 0.00144< \frac{3\ln{3}-\ln{26}}{8 \ln 26}.
$$

If $\HH$ is isomorphic to $K_5$, Lemma~\ref{lemma(ck)} gives $c(\HH)^{25} \leq \tilde{c}_5(r) = (r/5)^{10}$. Therefore, $c(\HH) \leq ((r/5)^{10})^{\frac{1}{25}} ={(r/5)^{2/5}}  < r^{\frac{1}{4} - \alpha}$ if
$$
\alpha \leq 0.04759 < \frac{8 \ln 5 - 3 \ln 26}{20 \ln 26}.
$$

Finally, if $\HH$ is isomorphic to $K_6$, by  Lemma~\ref{lemma(ck)} the product of the sizes of the color lists of $\HH$ is at most  $\tilde{c}_6(r)$. We get 
\begin{eqnarray*}
c(\HH)^{36} &\leq& (\tilde{c}_6(r))^{\frac{1}{36}} = \left( \frac{r}{5} \right)^{\frac{15}{36}} 
\leq r^{\frac{1}{4} - \alpha},
\end{eqnarray*}
which holds for 
$$ \alpha \leq 0.03915<  \frac{15 \ln 5 - 6 \ln 26}{36 \ln 26}.
$$

Having established that $\HH$ is not isomorphic to a clique on $3\leq m \leq 6$ vertices, let $\omega(\HH)\geq 3$ denote the size of a maximum clique in $\HH$. If $\omega(\HH)\geq 6$, then the fact that $\HH\neq K_6$ implies that $m>6$. Fix $k=6$ and choose a set $A$ of vertices such that $A$ induces a copy of $K_6$ in $\HH$. Otherwise, let $k=\omega(\HH)$ and fix a set $A$ of vertices of size $k$ that induces a copy of $K_k$ in $\HH$. By the above, we know that $m>k$ in this case. Given a vertex $v \in V(\HH)\setminus A$, let $c_v$ be the product of the sizes of the lists on edges connecting $v$ to $A$. Clearly,
\begin{equation}\label{eq_UB}
c(\HH)^{m^2} = c(\HH[A])^{k^2} \cdot \left(\prod_{v \in V(\HH)\setminus A} c_v \right) \cdot  c(\HH[V(\HH)\setminus A])^{(m-k)^2}.
\end{equation}
We know that $c(\HH[A]),c(\HH[V(\HH)\setminus A]) \leq  r^{\frac{1}{4} - \alpha} $ by the minimality of $\HH$. 

If $k=6$, we have $c_v \leq \overline{c}_{6}(r)$ by Lemma~\ref{gen_claim}. Then~\eqref{eq_UB} leads to
\begin{eqnarray*}
c(\HH)^{m^2} &\leq &  r^{(\frac{1}{4}- \alpha)6^2} \cdot (\overline{c}_{6}(r))^{m-6} \cdot r^{(\frac{1}{4}- \alpha)(m-6)^2}\nonumber \\
&=& \left(\frac{\overline{c}_6(r)}{r^{3 - 12\alpha}}\right)^{m-6} \cdot  r^{(\frac{1}{4} - \alpha)m^2}.
\end{eqnarray*}
We conclude that $c(\HH)^{m^2} \leq  r^{(\frac{1}{4} - \alpha)m^2}$ because $\overline{c}_6(r)/r^{3-12\alpha} <1$ for $13 \leq r \leq 26$. Here, it suffices to verify that the quantity $\overline{c}_6(r)$ defined in~\eqref{gen_UB} satisfies $\overline{c}_6(r)<r^{3-12\alpha}$. We have 
$$\overline{c}_6(r)=
\begin{cases}
(r/6)^6& \textrm{ if } r\geq 15\\
(r/5)^5& \textrm{ if } 13\leq r\leq 14.\\
\end{cases}
$$ 
As it turns out, $\overline{c}_6(r)<r^{3-12\alpha}$ for 
$$\alpha \leq 0.0249< \frac{2\ln{6}-\ln{26}}{4 \ln{26}}.$$

For $k<6$, the maximality of the clique implies that any vertex  $v \in V(\HH)\setminus A$ has at most $k-1$ neighbors in $A$, so that~\eqref{eq_UB} becomes
\begin{eqnarray}\label{UB_otherk}
c(\HH)^{m^2} &\leq &  r^{(\frac{1}{4}- \alpha)k^2} \cdot (c^*_{k-1}(r))^{m-k} \cdot r^{(\frac{1}{4}- \alpha)(m-k)^2}\nonumber \\
&=& \left(\frac{c^*_{k-1}(r)}{r^{\frac{k}{2} - 2k\alpha}}\right)^{m-k} \cdot  r^{(\frac{1}{4} - \alpha)m^2}
\end{eqnarray}
 To obtain our result, we show that $c^*_{k-1}(r)<r^{\frac{k}{2} - 2k\alpha}$ for all suitable $k$ and $r$.
 
 If $k=5$, we use that $c^*_{4}(r)\leq \overline{c}_{4}(r)=(r/4)^4\stackrel{(i)}<r^{\frac{5}{2} - 10\alpha}$ for $2 \leq r\leq 26$. Indeed, (i) is equivalent to $r^{10\alpha+\frac{3}{2}}\leq 4^{4}$, which holds if 
 $$\alpha \leq 0.0201<\frac{8\ln{4}-3\ln{26}}{20 \ln{26}}.$$ 

If $k=4$, we use that $c^*_{3}(r)\leq \overline{c}_{3}(r)=(r/3)^3\stackrel{(ii)}<r^{2 - 8\alpha}$ for $2 \leq r\leq 26$. Indeed, (ii) holds for
$$\alpha \leq 0.0014 <\frac{3\ln{3}-\ln{26}}{8 \ln{26}}.$$ 
We observe that, for $r=27$, there is no $\alpha > 0$ for which~(ii) holds.

If $k<4$, we know by the hypothesis that $\HH \notin \mathcal{S}_1$ that $\mathcal{H}$ is $K_4$-free, but contains a copy of $K_3$ whose edges have lists of size at least four. We fix such a $3$-vertex set $A \subset V(\HH)$ that induces a copy of $K_3$ whose edges have lists of size at least four. If $v \in V(\HH) \setminus A$, then $v$ has at most two neighbors in $A$. If $v \in V(\HH) \setminus A$ has at most one neighbor in $A$, then its list has size at most $r_0$. If $v \in V(\HH) \setminus A$ has exactly two neighbors in $A$, say $v_1$ and $v_2$, then these edges form a triangle with an edge in the copy of $K_3$. Since the list of the edge $\{v_1, v_2 \}$ has size  at least four, the product $c_v$ of the sizes of the lists associated with the two edges between $A$ and $v$ is at most $(r-4)^2/4=s^*(r)$. We observe that $r_0 < s^*(r)$ for all $r\geq 13$. With this, the inequality~\eqref{UB_otherk} may be sharpened as
$$c(\HH)^{m^2} \leq  \left(\frac{(r-4)^2}{4r^{\frac{3}{2} - 6\alpha}}\right)^{m-k} \cdot  r^{(\frac{1}{4} - \alpha)m^2}.$$ 
Note that $(r-4)^2 \leq 4 \cdot r^{\frac{3}{2} - 6 \alpha}$ is equivalent to 
$$r^{\frac{1}{2}+6\alpha}\left(1-\frac{4}{r}\right)^2\leq 4.$$ For $\alpha>0$, the left-hand side is increasing as a function of $r$, so this holds for $13 \leq r \leq 26$ because $$26^{\frac{1}{2}+6\alpha}\left(1-\frac{4}{26}\right)^2 \leq 4$$ 
holds for $\alpha \leq 0.0046$. This concludes the induction step and proves Lemma~\ref{lemma26:s2<27}.
\end{proof}

\section{Final remarks and open problems}

The objective of this paper was to characterize the values of $r \geq 2$ for which the bipartite Tur\'{a}n graph $T_2(n)$ is the unique $(r,K_3^{(2)})$-extremal graph for all sufficiently large $n$. With Theorem~\ref{theorema:main}, we established that this holds precisely for $2\leq r\leq 26$. In this section, our aim is to put this result in a more general perspective and to discuss a few open problems.

Let $P$ be a pattern of a complete graph $K_k$. The following facts are known to hold (see~\cite{eurocomb15}):
\begin{itemize}

\item[(a)] If $P=K_k^R$, the rainbow pattern of $K_k$, then there exists $r_0$ such that the following holds for all $r\geq r_0$. There exists $n_0$ such that, for all $n \geq n_0$, the unique $n$-vertex $(r,P)$-extremal graph is the Tur\'an graph $T_{k-1}(n)$.

\item[(b)] If $P\neq K_k^R$, then there exists $r_1$ such that the following holds for all $r\geq r_1$. There exists $n_0$ such that, for all $n \geq n_0$, the Tur\'{a}n graph $T_{k-1}(n)$ is \emph{not} $(r,P)$-extremal.
\end{itemize}
This raises natural questions.
\begin{prob}
Given $k\geq 3$, let $r_0(k)$ be the least value of $r_0$ such that (a) holds. Determine $r_0(k)$.
\end{prob}
It is known that $r_0(3)=4$~\cite{baloghli}. For $k\geq 4$, upper and lower bounds on $r_0(k)$ have been provided in~\cite{multipattern,eurocomb15}, but we believe that the upper bounds in these papers are much larger than the actual value of this parameter. 

\begin{prob}
Given $k\geq 3$, characterize the $n$-vertex $(r,K_k^R)$-extremal graphs for $r<r_0(k)$. 
\end{prob}
Trivially, any such extremal graph must be isomorphic to $K_n$ for $r<\binom{k}{2}$, but it is not clear what happens for the remaining values of $r<r_0(k)$. In the case $k=3$, the only remaining value is $r=3$, and it is known that $K_n$ is the unique extremal graph in this case (for $n$ large.) To the best of our knowledge, no pair $(k,r)$ has been found for which there are arbitrarily large $(r,K_k^R)$-extremal graphs that are neither isomorphic to $T_{k-1}(n)$ nor to $K_n$.

\begin{prob}
Given $k\geq 3$ and a pattern $P$ of $K_k$ such that $P\neq K_k^R$, let $r_1(P)$ be the greatest value of $r_1$ such that (b) holds. Determine $r_1(P)$ and 
$$r_1(k)=\max \{r_1(P) \colon  P \textrm{ pattern of }K_k,~P\neq K_k^R\}.$$
\end{prob}
In this paper, we showed that $r_1(K_3^{(2)})=2$. It is known that, for any monochromatic pattern $K_k^M$, we have $r_1(K_k^M)=4$. In particular $r_1(3)=27$.

\begin{prob}\label{prob4}
Given $k\geq 3$ and a pattern $P$ of $K_k$ such that $P\neq K_k^R$, characterize the $n$-vertex $(r,P)$-extremal graphs for $r>r_1(P)$. 
\end{prob}
For $P=K_3^{(2)}$ and $r=27$, we believe that $T_4(n)$ is the unique $(r,P)$-extremal graph for all sufficiently large $n$, but we do not have a proof of this. Moreover, the work in~\cite{BHS17} implies that, except for the patterns $K_k^M$ and $K_3^{(2)}$, the $(r,P)$-extremal graphs mentioned in Problem~\ref{prob4} must be complete multipartite graphs. However, it is not known whether the partition must always be equitable. Recent work by Botler et al.~\cite{botler} and by Pikhurko and Staden~\cite{PS2022} shows that, for some monochromatic patterns of complete graphs, unbalanced complete multipartite graphs can be very close to extremal, and may even be extremal in some cases.

\appendix

\section{Proof of Lemma~\ref{lemma:main_result} for $r \leq 12$} 

As mentioned in the proof of  Lemma~\ref{lemma:main_result} for $r \geq 13$, the case of $r\leq 12$ is easier and will be presented here for completeness. It is based on the conference paper~\cite{2coloredlagos}. Indeed, one can alternatively prove this with the arguments in the case $r \geq 13$, but this would require more case analysis. The proof is identical to the case $r\geq 13$ up to~\eqref{eq:coloring_of_g}, and we again wish to find an upper bound on
\begin{align}
M^n \cdot 2^{\frac{rM^2}{2}} \cdot 2 ^ {H((r+1) \eta) n^2} \cdot r^{(r+1) \eta n^2} \cdot \max_{\HH} \left( \prod_{j=1}^{r} j ^ {\frac{e_j(\HH)}{|V(\HH)|^2}} \right)^{n^2}. \label{eq:coloring_of_g2}
\end{align}

Note that, for any fixed $m$-vertex multicolored cluster graph $\HH$, the inequality
\begin{eqnarray}\label{aux_eq1}
e_{\lfloor \frac{r}{3} \rfloor +1}(\HH)+\cdots+e_r(\HH) \leq \ex(m,K_3)
\end{eqnarray}
holds, as otherwise $\HH$ would contain a triangle such that the sum of the sizes of the lists of its edges is at least $3 \lfloor r/3 \rfloor +3>r$. By the pigeonhole principle, this gives a copy of $K_3^{(2)}$ in $\HH$ and therefore in the original colored graph by  Lemma~\ref{lemma:colored_subgraph}, a contradiction.

Equation~\eqref{aux_eq1} settles the problem for $2 \leq r \leq 5$. It implies that 
 $$e_2(\HH) + \cdots +e_r(\HH) \leq \ex(m,K_3).$$
 For $2 \leq r \leq 4$,
as in the proof of the case $r\geq 13$, see Claim~\ref{claim13},  if there exists $\HH$ such that 
$$e_2(\HH) + \cdots +e_r(\HH) \geq \ex(m,K_3)-\xi m^2,$$
we are done. Otherwise,  equation~\eqref{eq:coloring_of_g2} is at most
\begin{eqnarray*}
&& M^n \cdot 2^{\frac{rM^2}{2}} \cdot 2 ^ {H((r+1) \eta) n^2} \cdot r^{(r+1) \eta n^2} \cdot r^{\left(\ex(m,K_3) - \xi m^2) \right))\left(\frac{n}{m} \right)^2} \stackrel{n \gg 1}{<} r^{\ex(n, K_3)}
 \end{eqnarray*}
colorings, which contradicts our choice of $G$ in the statement of Lemma~\ref{lemma:main_result}. For $r=5$,  if there exists $\HH$ such that 
$$e_3(\HH) + e_4(\HH) +e_5(\HH) \geq \ex(m,K_3)-\xi m^2,$$
we are done again. Otherwise, if
$$e_3(\HH) + e_4(\HH) + e_5(\HH) \leq \ex(m,K_3)-\xi m^2,$$
keeping in mind that $e_2(\HH) + \cdots + e_5(\HH) \leq \ex(m,K_3)$, equation~\eqref{eq:coloring_of_g2} is at most
\begin{eqnarray*}
&& M^n \cdot 2^{\frac{5M^2}{2}} \cdot 2 ^{H(6 \eta) n^2} \cdot 5^{6 \eta n^2} \cdot \left( 2^{\xi m^2} \cdot 5^{\ex(m,K_3) - \xi m^2 }\right)^{\left(\frac{n}{m} \right)^2} \stackrel{n \gg 1}{<} r^{\ex(n, K_3)}
 \end{eqnarray*}
colorings, which contradicts again our choice of $G$ in the statement of Lemma~\ref{lemma:main_result}.

Assuming that $r \geq 6$, note that $r-3\geq \lfloor r/3 \rfloor +1$.  If there exists a multicolored cluster graph such that
$$
e_{r-3}(\HH)+\cdots+e_r(\HH) > \ex(m,K_3) -\xi m^2,
$$
we are again done using previous arguments, so we assume for a contradiction that this is not the case, that is, assume that
\begin{equation}\label{aux_eq2}
e_{r-3}(\HH)+\cdots+e_r(\HH) \leq \ex(m,K_3) -\xi m^2.
\end{equation} 

Given a $2$-element set $S \subset [r]$ and $j \in \{2,\ldots,r-4\}$, let $E_j(S,1;\HH)$ be the set of all edges  $e' \in E_j(\HH) $ that
satisfy  $|L_{e'} \cap S| \geq 1$, and let $e_j(S,1;\HH) =|E_j(S,;\HH)|$.
\begin{proposition} \label{prop:2}
Consider a multicolored cluster graph $\HH$ with no $K_3^{(2)}$. 
\begin{itemize}
\item[(a)] For each $2$-element subset $S \subseteq [r]$ of colors, the subgraph $\HH'$ of the multicolored cluster graph $\HH$ with edge set $\bigcup_{j=2}^{r-4} E_j(S,1 ;\HH) \cup \bigcup_{\ell = r-3}^r  E_{\ell}(\HH)$ is $K_3$-free. 
\item[(b)]  Moreover, there exists a $2$-element subset $S\subseteq [r]$ such that
\begin{eqnarray*} 
		&& \left|  \bigcup_{j=2}^{r-4} E_j(S,1;\HH) \right| 
		\geq  \sum_{j=2}^{r-4} \frac{  \binom{r}{2} - 
		 \binom{r-j}{2}   }{\binom{r}{2}} \cdot |E_j(\HH)|.
		\end{eqnarray*}
		\end{itemize}
\end{proposition}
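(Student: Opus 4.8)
\textbf{Proof proposal for Proposition~\ref{prop:2}.}

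For part (a), the plan is a direct case analysis on a putative triangle $T$ in $\HH'$. Every edge of $\HH'$ either lies in $E_\ell(\HH)$ for some $\ell \geq r-3$ (so its list has size at least $r-3$), or lies in some $E_j(S,1;\HH)$ with $j \leq r-4$ (so its list contains at least one of the two colors of $S$ and has size at least two). Suppose $T$ has edges $e_1,e_2,e_3$. If all three lie in $\bigcup_{\ell \geq r-3} E_\ell(\HH)$, then $|L_{e_1}|+|L_{e_2}|+|L_{e_3}| \geq 3(r-3) > r$ for $r \geq 6$, forcing two lists to intersect and giving a $K_3^{(2)}$. If at least one edge, say $e_1$, lies in some $E_j(S,1;\HH)$, then $L_{e_1}$ contains a color $\alpha \in S$; since $|S|=2$, among the edges of $\HH'$ incident to $S$ through the "$|L\cap S|\geq 1$" condition the colors of $S$ are heavily shared, and one checks that in each sub-case the sizes add up beyond what a rainbow triangle permits. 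More cleanly: if two of $e_1,e_2,e_3$ both lie in $\bigcup_j E_j(S,1;\HH)$, then both of their lists meet the $2$-set $S$, so by pigeonhole they share a color of $S$ unless the two colors of $S$ are split between them—but then the third edge, whichever class it is in, has a list of size at least $\min\{2, r-3\} \geq 2$, and combined with the remaining room $r - |L_{e_1}| - |L_{e_2}|$ available we again force an intersection. I would organize this as: (i) the "all three big" case; (ii) "exactly two edges touch $S$"; (iii) "exactly one edge touches $S$ and the other two are big"—in (iii) the two big edges already have total list size $\geq 2(r-3) > r - 2$, leaving no room for the third edge's list (size $\geq 2$) to be disjoint from both. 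This exhausts the cases and proves $\HH'$ is $K_3$-free.

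For part (b), the plan is an averaging (first-moment) argument over the random $2$-set $S$. Fix an edge $e \in E_j(\HH)$, so $|L_e| = j$ with $2 \leq j \leq r-4$. Choose $S$ uniformly at random among the $\binom{r}{2}$ two-element subsets of $[r]$. Then $e \in E_j(S,1;\HH)$ precisely when $L_e \cap S \neq \emptyset$, which fails only when $S$ is one of the $\binom{r-j}{2}$ two-subsets of $[r]\setminus L_e$. Hence
\[
\Pr[e \in E_j(S,1;\HH)] = \frac{\binom{r}{2} - \binom{r-j}{2}}{\binom{r}{2}}.
\]
By linearity of expectation,
\[
\mathbb{E}_S\!\left[\left|\bigcup_{j=2}^{r-4} E_j(S,1;\HH)\right|\right]
= \sum_{j=2}^{r-4} \sum_{e \in E_j(\HH)} \Pr[e \in E_j(S,1;\HH)]
= \sum_{j=2}^{r-4} \frac{\binom{r}{2} - \binom{r-j}{2}}{\binom{r}{2}}\, |E_j(\HH)|,
\]
and therefore some choice of $S$ attains at least the expected value, which is exactly the claimed bound.

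The only genuinely delicate point is the bookkeeping in part (a), case (ii), where both triangle edges carry the side condition $|L \cap S| \geq 1$: one must verify that no allocation of list sizes and colors is consistent with all three pairwise intersections being empty. I expect this to come down to the inequality $2(r-3) + 2 > r$ (equivalently $r > 4$), together with the observation that if $L_{e_1}$ and $L_{e_2}$ each meet the $2$-set $S$ but are themselves disjoint, then one of them contains exactly one color of $S$ and the two lists already "use up" the two colors of $S$, after which the third edge's list—of size at least $2$ if small, at least $r-3$ if big—cannot avoid both. So the whole argument reduces to elementary counting once the cases are laid out; there is no structural obstacle, only the need to be careful that the definition of $E_j(S,1;\HH)$ (list meets $S$ in at least one color) interacts correctly with the $K_3^{(2)}$-freeness hypothesis (no triangle has two edges with intersecting lists).
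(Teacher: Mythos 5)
Your proposal is correct and follows essentially the same route as the paper: part (b) is the identical averaging/double-counting computation, and part (a) rests on the same two mechanisms (list sizes summing past $r$ when a large list is present, and pigeonhole on the $2$-set $S$ when lists meet $S$). One remark: your case analysis in (a) collapses to the paper's cleaner dichotomy---either some edge of the triangle lies in $\bigcup_{\ell=r-3}^{r}E_\ell(\HH)$, whence the three list sizes sum to at least $(r-3)+2+2=r+1>r$ and two lists intersect, or all three edges meet $S$ with $|S|=2$ and two of them share a color of $S$---which avoids the vague ``remaining room'' step in your case (ii); that size count alone does not settle the sub-case of three small edges (total list size can be as small as $6\le r$), though the color-of-$S$ observation in your final paragraph does close it.
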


\begin{proof}
We first argue that $  \bigcup_{j=2}^{r-4} E_j(S,1;\HH) \cup \bigcup_{\ell = r-3}^r E_{\ell}(\HH)$ is $K_3$-free. For a contradiction suppose that there is a triangle with edges $f_1,f_2,f_3$. Note that the lists of these edges have size at least two.

If one of the edges lies in $ \bigcup_{\ell = r-3}^r E_{\ell}(\HH)$, we may sum the  which produces a $K_3^{(2)}$, a contradiction. Next, assume that $f_1,f_2,f_3 \in  \bigcup_{j=2}^{r-4} E_j(S,1;\HH)$. Note that $|L_{f_1} \cap S|+|L_{f_2} \cap S|+|L_{f_3} \cap S| \geq 3$, so that two of the lists must contain the same color $a \in S$, and the third list contains a color $b \neq a$, which proves part~(a).

For part~(b), we claim that
$$ \sum_{S \in \binom{[r]}{2}} \sum_{j=2}^{r-4} |E_j(S,1;\HH)| =  \sum_{j=2}^{r-4} \left(  \binom{r}{2} - 
		\binom{r-j}{2} \right) \cdot e_j(\HH).$$
Indeed, for $j =2, \ldots ,  r-4$, every edge  $e \in E_j(\HH)$ is counted on the left hand side for all sets $S \in \binom{[r]}{2}$ 
such that $|S \cap e| \geq 1$, which amounts to
$ 
		 \binom{r}{2} - 
 \binom{r-j}{2} 
		$
times.

By averaging, as there are $\binom{r}{2}$ distinct $2$-element subsets in $[r]$,
	there exists a $2$-element subset $S \subseteq [r]$ such that 
	\begin{eqnarray*}
		&& \left|\bigcup_{j=2}^{r-4} E_j(S,1;\HH) \right| 
		\geq \sum_{j=2}^{r-4} \frac{   \binom{r}{2} - 
		\binom{r-j}{2}  }{\binom{r}{2}}
		  \cdot e_j(\HH).
		\end{eqnarray*}
\end{proof}

Proposition~\ref{prop:2}~(b) implies the following inequality:
\begin{eqnarray} \label{eq:lin_A2}
&& \sum_{j=2}^{r-4} \frac{  \binom{r}{2} - 
		 \binom{r-j}{2}   }{\binom{r}{2}} \cdot e_j(\HH) + \sum_{\ell = r-3}^r e_\ell(\HH) \leq \ex(m,K_3).
		 \end{eqnarray}

Going back to~\eqref{eq:coloring_of_g2}, we have
\begin{eqnarray}\label{eq:4}
M^n \cdot 2^{\frac{r M^2}{2}} \cdot 2 ^ {H((r+1) \eta) n^2} \cdot r^{(r+1) \eta n^2} \cdot \max_{\HH} \left( \prod_{j=1}^{r} j ^ {\frac{e_j(\HH)}{|V(\HH)|^2}} \right)^{n^2}.
\end{eqnarray}
Note that finding the maximum in this equation is equivalent to maximizing
\begin{equation*}
e_2(\HH) \ln{2}+ e_3(\HH) \ln{3} + \cdots + e_r(\HH) \ln{r},
\end{equation*}
which is a linear objective function with respect to the variables $e_2(\HH),\ldots,e_r(\HH) \geq 0$. Together with linear constraints in~(\ref{aux_eq2}) and (\ref{eq:lin_A2}), we obtain a linear program as follows. Given $\HH$, set $\zeta(\HH)=\left( \ex(m,K_3) -e_{r-3}(\HH)-\cdots-e_r(\HH) \right)/m^2$, so that $\zeta(\HH) \geq \xi$. The inequalities~(\ref{aux_eq2}) and~(\ref{eq:lin_A2}) tell us that to find an upper bound on~(\ref{eq:4}), we may consider the linear program
\begin{eqnarray}
&&\max ~~  x_2 \ln{2}+ x_3 \ln{3} + \cdots + x_{r-4} \ln{(r-4)}  \label{eq:linear} \\
&&\sum_{j=2}^{r-4} \frac{  \binom{r}{2} - \binom{r-j}{2}   }{\binom{r}{2}} \cdot x_j  \leq 1 \nonumber \\
&&x_2,\ldots, x_{r-4} \geq 0, \nonumber
\end{eqnarray}
where $x_i$ plays the role of $e_i(\HH)/(\zeta(\HH) m^2)$. As it turns out, for $r \in \{6,\ldots,12\}$, if $y(r)$ is the optimum of the linear program, the value of $Y(r)=e^{y(r)}$ is given in Table~\ref{table:1}.  

\begin{table}[h]
\begin{center}
\begin{tabular}{|c|c|c|c|c|c|c|c|}
\hline
   $r$ &  $6$ & $7$ & $8$ & $9$ &  $10$ & $11$ & $12$ \\
\hline
$Y(r)$ & $2^{5/3} \approx$ 3.17 & $3^{7/5} \approx$  4.65&  $4^{14/11} \approx 5.84$ & $5^{6/5} \approx 6.90$ & $4^{3/2}=8$ & $4^{55/34} \approx 9.42$ & $3^{11/5} \approx 11.21$  \\
\hline
\end{tabular}
\end{center}
\caption{Values of $Y(r)$.}
\label{table:1}
\end{table}

Clearly, for any multicolored cluster graph $\HH$, we have 
\begin{eqnarray}\label{eq_Mr}
\prod_{j=1}^r j^{e_j(\HH)} &=& \left( \prod_{j=1}^{r-4} j^{e_j(\HH)} \right) \cdot \left( \prod_{j=r-3}^{r}  j^{e_j(\HH)} \right) \nonumber\\
&\leq& \left( \prod_{j=1}^{r-4} j^{e_j(\HH)} \right) \cdot r^{e_{r-3}(\HH)+\cdots+e_r(\HH)} \nonumber\\
& \leq& Y(r)^{\zeta(\HH) m^2} \cdot r^{\ex(m,K_3) -\zeta(\HH) m^2} \leq Y(r)^{\xi m^2} \cdot r^{\ex(m,K_3)-\xi m^2}.
\end{eqnarray} 
Plugging this into~\eqref{eq:4}, as $Y(r)<r$, we see that $G$ has fewer than $r^{\ex(n,K_3)}$ colorings, the desired contradiction.

\end{document}